\newtheorem{theorem}{Theorem}
\newtheorem{lemma}[theorem]{Lemma}
\newtheorem{definition}[theorem]{Definition}
\newtheorem{remark}[theorem]{Remark}
\numberwithin{theorem}{section}
\numberwithin{equation}{section}
\numberwithin{figure}{section}
\newcommand{\ve}{\varepsilon}
\newcommand{\ol}{\overline}
\newcommand{\ul}{\underline}
\newcommand{\calF}{\mathcal{F}}
\newcommand{\calC}{\mathcal{C}}
\newcommand{\calA}{\mathcal{A}}
\newcommand{\calO}{\mathcal{O}}
\newcommand{\calJ}{\mathcal{J}}
\newcommand{\ZZ}{\mathbb{Z}}
\newcommand{\RR}{\mathbb{R}}
\newcommand{\CC}{\mathbb{C}}
\newcommand{\HH}{\mathbb{H}}
\newcommand{\PP}{\mathbb{P}}
\newcommand{\EE}{\mathbb{E}}
\newcommand{\TT}{\mathbb{T}} 
\newcommand{\Ball}{B} 
\newcommand{\din}{\partial^{\textrm{in}}} 
\newcommand{\dout}{\partial^{\textrm{out}}} 
\newcommand{\cluster}{\mathcal{C}}
\newcommand{\Cinf}{\cluster_\infty} 
\newcommand{\Ch}{\mathcal{C}_H} 
\newcommand{\Cv}{\mathcal{C}_V} 
\newcommand{\arm}{\mathcal{A}} 
\newcommand{\lra}{\leftrightarrow}
\begin{document}

\title{Two-dimensional forest fires with boundary ignitions}
\author{Jacob van den Berg\footnote{CWI, Amsterdam; E-mail: \texttt{J.van.den.Berg@cwi.nl}.}, Pierre Nolin\footnote{City University of Hong Kong; E-mail: \texttt{bpmnolin@cityu.edu.hk}. Partially supported by a GRF grant from the Research Grants Council of the Hong Kong SAR (project CityU11309323).}}

\date{}

\maketitle

\begin{abstract}
In the classical Drossel-Schwabl forest fire process, vertices of a lattice become occupied at rate $1$, and they are hit by lightning at some tiny rate $\zeta > 0$, which causes entire connected components to burn. In this paper, we study a variant where fires are coming from the boundary of the forest instead.

In particular we prove that, for the case without recoveries where the forest is an $N \times N$ box in the triangular lattice, the probability that the center of the box gets burnt tends to $0$ as $N \rightarrow \infty$ (but substantially slower than the one-arm probability of critical Bernoulli percolation). And, for the case where the forest is the upper-half plane, we show (still for the version without recoveries) that no infinite occupied cluster emerges. We also discuss analogs of some of these results for the corresponding models with recoveries, and explain how our results and proofs give valuable insight on a process considered earlier by Graf \cite{Gr2014}, \cite{Gr2016}.

\bigskip

\textit{Key words and phrases: near-critical percolation, forest fires, self-organized criticality.}

\textit{AMS MSC 2020: 60K35; 82B43.}
\end{abstract}


\section{Introduction} \label{sec:intro}

\subsection{Background and motivation} \label{sec:backgr}

Let $G = (V,E)$ be an infinite two-dimensional lattice, such as the square lattice $\ZZ^2$ or the triangular lattice $\TT$, where $V$ and $E$ contain its vertices and edges, respectively. We consider processes which are indexed by time $t \in [0,\infty)$, and consist of vertex configurations $(\omega_v(t))_{v \in V_D}$ in given subdomains $D = (V_D,E_D)$ of $G$. Here, $V_D \subseteq V$, and $E_D$ contains all edges in $E$ whose endpoints both lie in $V_D$. At each time $t \in [0,\infty)$, every vertex $v \in V_D$ can be in three possible states: \emph{vacant} ($\omega_v(t) = 0$), \emph{occupied} ($\omega_v(t) = 1$), and \emph{burnt} ($\omega_v(t) = -1$).

Initially, at time $t=0$, all vertices in $V_D$ are vacant, and they then become occupied at rate $1$. Moreover, we add the following \emph{ignition} mechanism along the outer boundary $\dout V_D$ of $V_D$ (consisting of all the vertices $v \in V \setminus V_D$ which are neighbors of one -- or several -- vertices in $V_D$, i.e., such that $\{v,v'\} \in E$ for some $v' \in V_D$): each vertex in $\dout V_D$ is hit by lightning at some given rate $\zeta \in (0,\infty]$. When this happens, all occupied neighbors in $V_D$ of this vertex, as well as all the vertices connected (in $V_D$) to these neighbors by an occupied path, become burnt immediately. Note that we allow the rate $\zeta$ to be infinite, which corresponds to connected components of occupied vertices (also called \emph{occupied clusters}, or simply clusters) burning as soon as they touch the boundary.

We can then adopt two natural rules for the future evolution of these burnt vertices, leading to two distinct processes: either they remain burnt (state $-1$) forever (forest fire \emph{without} recovery), or they are simply considered the same as vacant, so that they can become occupied again, still at rate $1$ (forest fire \emph{with} recovery) -- and then burn again, and so on. Observe that when there are no recoveries, every vertex $v$ remains eventually (at times sufficiently large, depending on $v$) either occupied or burnt.

In this paper, we analyze these processes in two particular situations. First, we consider sequences of increasing finite subdomains $D_N = (V_N,E_N)$, $N \geq 1$. We then analyze forest fire processes in the upper half-plane, where $V_D = V \cap \HH$, denoting $\HH := \{(x,y) \in \RR^2 \: : \: y \geq 0\}$. In this case, ignitions thus originate along the real line. This specific setting is directly connected to earlier papers by Graf \cite{Gr2014}, \cite{Gr2016}, that were an important inspiration for us.

In the absence of ignitions, we would get the classical Bernoulli site percolation process in $V_D$, at each time $t \in [0,\infty)$. In this process, each vertex $v \in V_D$ is either occupied or vacant, with respective probabilities $p$ and $1-p$, independently of the other vertices, where $p = p(t) := 1 - e^{-t}$ is the percolation parameter. In what follows, we refer to this underlying percolation process as the \emph{pure birth} process. On the whole (2D) lattice $G$, Bernoulli percolation is known to display a phase transition at some distinguished value of $p$, called the percolation threshold, that we denote by $p_c^{\textrm{site}}(G)$ ($\in (0,1)$): for each $p < p_c^{\textrm{site}}(G)$ (subcritical regime), there exists (almost surely) no infinite cluster, while there is (at least) one for $p > p_c^{\textrm{site}}(G)$ (supercritical regime). In particular, we can consider the \emph{critical} time $t_c := -\log (1-p_c^{\textrm{site}}(G))$, at which the pure birth configuration is exactly critical (and so an infinite cluster starts to emerge).

Our main goal in the present work is to understand the macroscopic effect of the boundary ignitions, and compare them to ``bulk'' ignitions, as in the classical Drossel-Schwabl forest fire process \cite{DrSc1992}. For such processes, existence was established by D\"urre \cite{Du2006}, in any dimension $d \geq 2$, and further properties were derived in recent works \cite{BN2021}, \cite{LN2021}. Note that strictly speaking, these two latter papers are concerned with the variant without recovery, but there is strong reason to believe that the near-critical behavior, close to time $t_c$ (and in fact, slightly later than that), is essentially the same when recoveries are allowed, as we emphasized in \cite{BN2022} (in a different, but related setting). For boundary ignitions, proving existence requires significant work in the upper half-plane, and this was done in \cite{Gr2014} (under an extra condition on the burning mechanism, see Section~\ref{sec:intro_Graf} in the present paper). Clearly, in the case of finite domains the process is a finite-state Markov chain and hence existence is standard.

\subsection{Main results} \label{sec:main_res}

Let us now describe informally our results. From now on, we focus on the triangular lattice $\TT$. We need to do it for technical reasons, as it is the two-dimensional lattice on which the most precise results are known rigorously for Bernoulli percolation. As before, we denote by $V$ and $E$ its set of vertices and set of edges, respectively.

First, we consider the forest fire process in finite ``hexagonal'' subdomains of $\TT$: for each $N \geq 1$, we let $H_N$ be the domain consisting of all the vertices within a graph distance $N$ from the origin $0$ (see Figure~\ref{fig:hexagon} below for an illustration). In order to emphasize the dependence on $N$, we use the notation $\PP_N$.

\begin{theorem} \label{thm:main_result1}
Consider the forest fire process without recoveries in $H_N$, $N \geq 1$, with boundary ignitions at a given rate $\zeta \in (0, \infty]$. We have
\begin{equation} \label{eq:main}
\PP_N(0 \text{ is eventually burnt}) \stackrel{N \to \infty}{\longrightarrow} 0.
\end{equation}
Moreover, for any $\delta > 0$: for all $N$ sufficiently large,
\begin{equation} \label{eq:main_quant}
\PP_N(0 \text{ is eventually burnt}) \geq N^{-\frac{5}{52} - \delta}.
\end{equation}
\end{theorem}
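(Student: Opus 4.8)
\section*{Proof proposal}

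The plan is to prove \eqref{eq:main} and \eqref{eq:main_quant} essentially independently, using very different circles of ideas.

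\medskip

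\noindent\textbf{Upper bound \eqref{eq:main}: the center is unlikely to burn.} The intuition is that boundary ignitions are ``weak'': a fire started on $\dout V_N$ can only reach $0$ by propagating through a macroscopic occupied connection, and before time $t_c$ such connections do not exist, while slightly after $t_c$ the repeated burning along the boundary keeps destroying the incipient infinite cluster, so that $0$ stays protected by a ``moat'' of vacant or burnt vertices. Concretely, I would fix a small mesoscopic-to-macroscopic annulus $A = \Ann(0; \rho N, N/2)$ around the origin and argue that, with probability tending to $1$, at the (random) first time $\tau$ at which any fire enters the inner disk $\Ball(0;\rho N)$, there is an occupied circuit in $A$ surrounding $0$ in the pure-birth configuration restricted to times close to $t_c$ — no wait, that is the wrong direction. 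Rather, I would show that with high probability there is at all times up to $\tau$ a \emph{vacant} circuit in $A$ surrounding $0$: before $t_c$ this is standard RSW/FKG since the density is subcritical-to-critical in $A$; to push slightly past $t_c$ one uses the key mechanism of near-critical forest fires (as developed in \cite{BN2021}, \cite{LN2021}), namely that each time a large occupied cluster touching the boundary of $H_N$ forms it immediately burns, and the burnt vertices act as permanent obstacles (no recovery), so the effective occupation density inside $A$ never gets far above criticality. Making this precise is the main obstacle: one must control, uniformly in $N$, that the cumulative effect of all boundary-triggered fires up to the relevant time leaves a surviving vacant (or burnt) circuit around $0$; I expect to invoke a version of the ``stochastic domination by near-critical percolation'' and the $a$-priori bounds on the number and size of burnt regions near $t_c$ from the cited works, adapting them to the boundary-ignition setting (which is in fact easier, since fires are localized near $\dout V_N$ and take time to travel inward). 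Given such a surviving circuit, $0$ can never be connected by occupied vertices to $\dout V_N$, hence never burns; letting $\rho \to 0$ and then $N \to \infty$ yields \eqref{eq:main}.

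\medskip

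\noindent\textbf{Lower bound \eqref{eq:main_quant}: the center burns at least polynomially often.} Here the strategy is to \emph{exhibit} a not-too-unlikely scenario forcing $0$ to burn. The exponent $\tfrac{5}{52}$ strongly suggests the following: $\tfrac{5}{48}$ is the one-arm exponent and $\tfrac{1}{4}$ is the four-arm exponent on $\TT$, and $\tfrac{5}{52}$ is what one gets from a ratio/interpolation of near-critical quantities. I would run the pure-birth process up to a time $t_N = t_c + \lambda_N$ slightly supercritical, chosen so that the correlation length $L(t_N)$ is a small power of $N$; with probability bounded below by a power of $N$ (via quasi-multiplicativity of arm events and Kesten's scaling relations, exactly as in the near-critical percolation toolbox), the origin is connected in the pure-birth configuration at time $t_c$ to distance $L(t_N)$ by a monochromatic occupied arm, and moreover this connection persists and grows so that shortly after $t_c$ the cluster of $0$ reaches $\dout V_N$ before any fire has had the chance to disconnect it. On that event $0$ gets burnt. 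The quantitative cost is: (i) a four-arm/one-arm price for the origin to be on the (near-critical) infinite cluster, which contributes a power close to $\tfrac{5}{48}$; times (ii) a correction because we only demand the connection on the small time window where no protective fire has yet occurred, which degrades the exponent from $\tfrac{5}{48}$ to $\tfrac{5}{52}$ (the denominator $52 = 48 + 4$ hints that one extra four-arm factor at the crossover scale is being paid). I would make this rigorous by: choosing $L = N^\alpha$ for a suitable $\alpha$; using \cite{BN2021}/\cite{LN2021}-type estimates to show that with probability $\ge N^{-o(1)}$ no fire touches $\Ball(0;L)$ before the pure-birth time at which the cluster of $0$ first hits $\dout V_N$; and combining with the near-critical one-arm estimate $\PP(0 \lra \partial \Ball(0;N) \text{ at time } t_c) \asymp N^{-5/48+o(1)}$ together with a gluing argument across the annulus $\Ann(0;L,N)$ where the configuration is essentially critical. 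Optimizing $\alpha$ produces the exponent $\tfrac{5}{52}$, and the arbitrarily small $\delta$ absorbs all the $N^{o(1)}$ and $\log$ corrections.

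\medskip

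\noindent\textbf{Main obstacle.} I expect the genuinely hard part to be the upper bound \eqref{eq:main}, specifically showing that the boundary fires, integrated over the entire (a priori unbounded) time horizon and over the whole boundary $\dout V_N$ of a growing box, cannot conspire to erode every vacant circuit around the origin; this requires a careful near-critical multi-scale analysis of the forest-fire dynamics past $t_c$ rather than a soft percolation argument, and is where the bulk of the work in the full proof will go. The lower bound, by contrast, should follow fairly directly from now-standard near-critical percolation technology once the correct scenario and the scale $L = N^\alpha$ are identified.
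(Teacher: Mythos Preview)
Your proposal misses the central mechanism of the paper, and as written the upper-bound argument would not go through; the lower bound might be salvageable but is not the paper's route, and your exponent heuristic is incorrect.

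\textbf{Upper bound.} You correctly sense that fires stay shallow for a while (this is Lemma~\ref{lem:depth_fires}), but your plan to maintain a vacant-or-burnt circuit around $0$ for all time via ``stochastic domination by near-critical percolation'' is essentially the impurities approach of \cite{M2021}; the paper remarks that this gives only a weak result and would be far more elaborate. The actual proof is not a multi-scale control of the dynamics past $t_c$. Instead it introduces a concrete object, the \emph{cone site}: a boundary vertex $v$ which at time $t_c$ carries an occupied arm to distance $\eta N$ inside a cone $\calC^{(\alpha)}(v)$ of opening just below $\pi$, and whose cone has not been touched by any earlier fire. A second-moment computation (Lemma~\ref{lem:cone_number}) produces w.h.p.\ at least $N^{2/3-\delta}$ such sites. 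Separately, no fire penetrates to depth $N^{1-\delta}$ before $\ul t = t_c + N^{-\beta}$ (with $\beta$ just below $\tfrac34$), so at $\ul t$ an occupied circuit $\calO$ deep inside $H_N$ is intact and connected to every cone site. Because there are so many cone sites, one of their boundary neighbors is ignited before $\ol t = t_c + N^{-2/3+2\delta}$, burning $\calO$. Since $L(\ol t)\to\infty$, a vacant circuit around $0$ still exists at $\ol t$, and the burnt $\calO$ then disconnects $0$ forever. The whole argument hinges on engineering \emph{one} explicit large fire at a controllable time via cone sites; your outline does not contain this idea.

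\textbf{Lower bound.} The paper does not build a direct arm from $0$ to $\dout H_N$; such an arm has to survive the frequent fires near the boundary, and controlling that would again need the cone machinery. Instead the lower bound falls out of the \emph{same} construction with optimized parameters: take cone arms to distance $N^{12/13+\delta}$, giving $\gtrsim N^{9/13-\delta}$ cone sites, so one is triggered before $\ol t = t_c + N^{-9/13+2\delta}$, at which $L(\ol t)\approx N^{12/13}$. With probability $\gtrsim \pi_1(L(\ol t)) = N^{-(5/48)(12/13)+o(1)} = N^{-5/52+o(1)}$ the origin is connected to $\calO$ at the moment $\calO$ burns, and hence burns with it. Thus $\tfrac{5}{52} = \tfrac{5}{48}\cdot\tfrac{12}{13}$, coming from the optimized cone scale; your ``$52=48+4$, one extra four-arm factor'' reading is a numerical coincidence, not the mechanism.

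Your difficulty assessment is also reversed: once the cone-site results of Section~\ref{sec:cone_sites} are in hand, both \eqref{eq:main} and \eqref{eq:main_quant} follow in about a page from the same picture.
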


In particular, note that the left-hand inequality of \eqref{eq:main_quant} implies that for the event that $0$ gets burnt at some time, its probability vanishes substantially more slowly than $\PP_{p_c}(0 \lra \partial H_N)$. Here, $0 \lra \partial H_N$ denotes the existence of an occupied connection to the boundary of the domain (in other words, the event that $0$ belongs to an occupied cluster that reaches $\partial H_N$). Indeed, $\PP_{p_c}(0 \lra \partial H_N)$ is known to decay as $N^{-\frac{5}{48} + o(1)}$ from \cite{LSW2002} (see the explanation between \eqref{eq:L_equiv} and \eqref{eq:theta_L_exp} below). By following closely the successive steps in our proof of \eqref{eq:main}, and estimating each of the (high probability) events along the way (one of them needs to fail), one can also obtain an explicit power-law upper bound on the probability that $0$ burns. However, so far we could only get such a bound with an exponent smaller than the exponent $\frac{5}{52}$ in the lower bound.

\begin{remark}
For our results, we do not need ignitions to come from the whole boundary: it is enough that only a positive fraction (bounded away from $0$ as $N \to \infty$) of the vertices get ignited. For example, exactly the same result would hold in the case of ignitions along the bottom side of $H_N$ only. In other words, we could state them in $H_N \cap \HH$, with ignitions coming from vertices with $y$-coordinate $-\frac{\sqrt{3}}{2} (N+1)$, and $N \to \infty$. Our reasonings do not really require that $\zeta$ is constant either, and we could allow it to be a function $\zeta_N$ of $N$. In this case, we need to require that $\zeta_N$ does not tend $0$ too quickly, and the relevant condition for our proofs is $\zeta_N \gg N^{-2/3}$ as $N \to \infty$.
\end{remark}

\begin{figure}[t]
\begin{center}

\includegraphics[width=.82\textwidth]{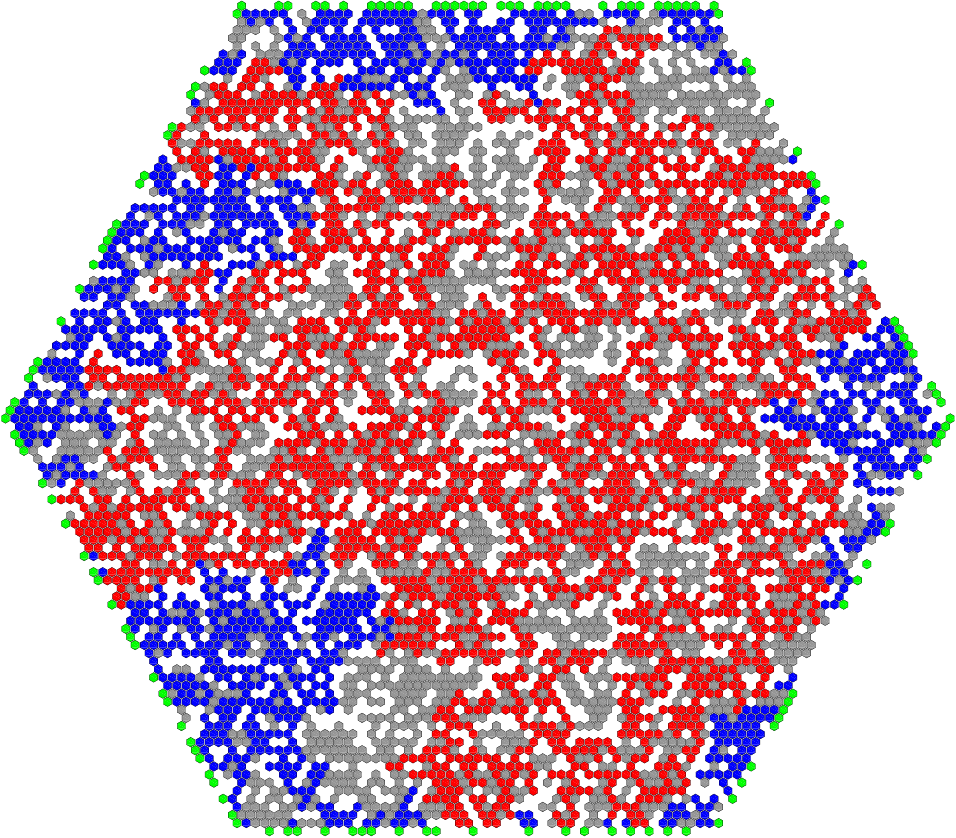}
\caption{\label{fig:simulation} This figure illustrates a typical configuration, at time $t = 2t_c$, for the forest fire process with boundary ignitions. More specifically, it depicts this process in the (finite) hexagonal domain $H_N$, with $N = 50$ and ignition rate $\zeta = \frac{1}{2}$. The vertices ignited so far, along $\dout H_N$, are shown in green. The white vertices are vacant, the red cluster is the largest burnt cluster, while the other burnt vertices are colored in blue, and the occupied (unburnt) vertices in gray.}

\end{center}
\end{figure}

Our reasonings are based on the existence of paths entering deep into the domain by staying within a cone, with an opening angle strictly smaller than $\pi$. These paths are used to control precisely the spread of ignitions from the boundary. We develop these geometric considerations in Section~\ref{sec:cone_sites}, showing that ``many'' such long-distance connections exist.

Roughly speaking, we show that slightly after time $t_c$, some portions of the boundary remain available, from where ignitions can spread. These boundary arcs, when ignited, allow one large cluster to burn (shown in red on Figure~\ref{fig:simulation}). This cluster is macroscopic, and it burns at a time which is slightly supercritical (more precisely, the characteristic length for the pure birth process tends to $\infty$ as $N \to \infty$, but it is still $\ll N$). We will use it to deduce that with high probability, this giant cluster does not reach the origin, but rather, it surrounds it and remains at a mesoscopic distance from it. In particular, the existence of this mesoscopic island containing the origin, unaffected by ignitions, means that the probability for the origin to burn tends to $0$.

As for the process with recoveries, tools originating from a paper by Kiss, Manolescu and Sidoravicius \cite{KMS2015} can then be harnessed, as in our paper \cite{BN2022}, to derive an analogous result in that situation. More precisely, we can control the process up to some time strictly later than the critical time $t_c$.

\begin{theorem} \label{thm:main_result2}
Consider the forest fire process with recoveries in $H_N$, $N \geq 1$, with boundary ignitions at rate $\zeta \in (0, \infty]$. For some universal $\hat{t} > t_c$ (that is, independent of $N$ and $\zeta$), we have
\begin{equation} \label{eq:main2}
\PP_N(0 \text{ gets burnt before time } \hat{t}) \stackrel{N \to \infty}{\longrightarrow} 0.
\end{equation}
\end{theorem}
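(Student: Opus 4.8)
The plan is to reduce the recovery case to the no-recovery analysis of Theorem~\ref{thm:main_result1} by coupling the two processes up to a suitable (deterministic) time $\hat t$ slightly above $t_c$, exactly as in our earlier work \cite{BN2022}. The starting observation is that, strictly before the first fire touches a given region, the forest fire process with recoveries and the one without recoveries agree in that region; so the only effect of recoveries is that burnt vertices can become occupied again. The key input is the machinery of Kiss, Manolescu and Sidoravicius \cite{KMS2015}: there is a universal $\hat t > t_c$ such that, up to time $\hat t$, in any forest fire process (with or without recoveries) the configuration outside the burnt clusters and their immediate vicinity is comparable — in the sense of stochastic domination of crossing and connection probabilities, uniformly over the past of the process — to a near-critical percolation configuration with characteristic length going to infinity with $N$. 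In particular, recovered vertices cannot recreate a macroscopic occupied cluster spanning a previously-burnt region before time $\hat t$, because re-growth from a recovered (hence burnt, hence ``thin'') region is too slow on the relevant scales.

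Granting this, I would run the argument sketched after Theorem~\ref{thm:main_result1} verbatim, but now only up to time $\hat t$. First, using the cone-path geometry of Section~\ref{sec:cone_sites}, I show that with high probability there is, slightly after $t_c$ but before $\hat t$, a portion of the boundary that is still ``available'' (not yet cut off from the bulk by earlier fires), from which an ignition spreads through one large, slightly-supercritical occupied cluster. Second, I show that this giant burning cluster surrounds the origin at a mesoscopic distance, leaving a mesoscopic island around $0$; here the $\ll N$ bound on the characteristic length is what guarantees the island is macroscopic in lattice units while still being at mesoscopic distance from $0$. Third — and this is where recoveries must be handled rather than ignored — I argue that between the formation of this protective island and time $\hat t$, the vertices inside the island that might have burnt (there are few, thanks to the comparison with near-critical percolation giving an arm-event bound) cannot, even after recovering, reconnect the origin to an ignited boundary arc before $\hat t$. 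This is because any such reconnection would require an occupied crossing of an annulus around $0$ whose inner radius is mesoscopic, and up to time $\hat t$ the probability of such a crossing through a region that includes recovered (i.e. previously burnt) sites is bounded, via \cite{KMS2015}, by a near-critical four-arm / crossing estimate that tends to $0$.

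I expect the main obstacle to be the third step: making precise, and controlling uniformly in $N$, the statement that recovered sites inside the protective island do not help the fire reach $0$ before $\hat t$. The delicate point is that one must track the \emph{joint} configuration — which sites inside the island were burnt, hence are candidates for recovery, and how their re-occupation interacts with the still-occupied part of the island — without losing the independence needed to invoke percolation estimates. The way around this, following \cite{BN2022}, is to condition on the burnt set and its outer boundary at the moment the protective circuit forms, use the \cite{KMS2015} comparison to dominate the \emph{future} evolution inside the island by an independent near-critical percolation process (restarted from the recovered/vacant sites at a time that is still only slightly supercritical), and then apply RSW-type bounds and the mesoscopic scale separation to conclude that the origin stays protected throughout $[t_{\text{island}}, \hat t]$. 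The remaining steps (the cone-path construction, the formation and location of the giant cluster, the high-probability surrounding of $0$) carry over from the proof of Theorem~\ref{thm:main_result1} with only cosmetic changes, since they concern the process strictly before any relevant recovery has had time to matter.
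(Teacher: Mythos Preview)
Your overall strategy matches the paper's: rerun the no-recovery argument to produce a burnt circuit surrounding the origin slightly after $t_c$, then invoke the \cite{KMS2015}/\cite{BN2022} machinery to show that recoveries cannot breach this barrier before some universal $\hat t>t_c$. The paper's execution of the recovery step is, however, more direct than your third step. Instead of conditioning on the burnt set and re-deriving a domination of the post-burn evolution by near-critical percolation, the paper simply adds one concrete geometric ingredient to the Theorem~\ref{thm:main_result1} construction: a further $\ul t$-occupied circuit $\calO'$ in the fixed macroscopic annulus $A_{N/4,N/2}$, together with a $\ul t$-occupied path connecting it to the near-boundary circuit $\calO$, so that $\calO'$ burns along with $\calO$. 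Then Theorem~5.7 of \cite{BN2022} is applied as a black box to this burnt circuit in $A_{N/4,N/2}$, yielding directly that $0$ remains isolated until $t_c+\mathfrak{d}=\hat t$. Your description of the recovery control (stochastic domination of crossing probabilities uniformly over the past, a ``four-arm / crossing estimate'') is not quite the mechanism at play and is essentially a loose paraphrase of what that theorem encapsulates; since the result is already available off the shelf, the elaborate conditioning you anticipate is unnecessary.
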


Furthermore, the same quantitative lower bound holds in this case: for any $\delta > 0$, for all $N$ sufficiently large,
\begin{equation} \label{eq:main_quant2}
\PP_N(0 \text{ gets burnt before time } \hat{t}) \geq N^{-\frac{5}{52} - \delta}.
\end{equation}

We now turn to Graf's forest fire process in the upper half-plane. We consider, rather, a version of this process without recoveries, for which we prove the following result.

\begin{theorem} \label{thm:main_resultH}
Consider the forest fire process without recoveries in $V \cap \HH$, with boundary ignitions at rate $\zeta \in (0, \infty]$. Then almost surely, no infinite occupied cluster emerges. More precisely, with probability $1$, we have: for every vertex $v \in V \cap \HH$, there exists $L < \infty$ such that at all times $t \in [0,\infty)$, the occupied cluster of $v$ contains at most $L$ vertices.
\end{theorem}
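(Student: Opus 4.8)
The plan is to fix a vertex $v \in V \cap \HH$ and show that, almost surely, the occupied cluster of $v$ is trapped at all times inside one and the same (random) bounded region; a union bound over the countably many vertices of $V \cap \HH$ then gives the theorem. Write $\cluster_v(t)$ for the occupied cluster of $v$ at time $t$ (empty if $v$ is not occupied at time $t$) and $\rho_v(t)$ for its radius, i.e.\ the largest graph distance from $v$ to a vertex of $\cluster_v(t)$, and let $T_R \in [0,\infty]$ be the first time $v$ is joined to graph distance $R$ by an occupied path. Then $\{\sup_t \rho_v(t) = \infty\} = \bigcap_{R \ge 1}\{T_R < \infty\}$ is a decreasing intersection, so, since $|\cluster_v(t)| \le |\Ball(v,\rho_v(t))|$, it suffices to prove that $\PP(T_R < \infty) \to 0$ as $R \to \infty$. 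The contribution of times $t \le t_c$ is harmless: under the standard monotone coupling with the pure birth process, $\cluster_v(t)$ is contained for every $t$ in the corresponding pure birth cluster, which is nondecreasing in $t$ and at time $t_c$ is contained in a critical Bernoulli cluster on $\TT$, hence a.s.\ finite; thus $\sup_{t \le t_c} \rho_v(t) < \infty$ a.s.

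The real content is in the near-critical window just above $t_c$. For a threshold $\delta_R \downarrow 0$ to be tuned, I would split $\{T_R < \infty\} \subseteq \{T_R < t_c + \delta_R\} \cup \{t_c + \delta_R \le T_R < \infty\}$. On the first event, monotonicity yields an occupied connection to distance $R$ in Bernoulli percolation at the slightly supercritical parameter $p(t_c+\delta_R)$, whose probability tends to $0$ as $R \to \infty$ provided $\delta_R$ is chosen so that the characteristic length is $o(R)$ (the probability that $v$ lies on the infinite cluster tends to $0$ by right-continuity of the percolation density at $p_c^{\textrm{site}}(\TT)$, while the probability of a large finite cluster reaching distance $R$ tends to $0$ by the near-critical one-arm decay together with exponential decay past the characteristic length). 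For the second event, note that since burnt vertices never recover, if at some time $t \le T_R$ there is a connected set of burnt vertices which, together with $\partial\HH$, encloses $v$ inside $\Ball(v,R/2)$ --- or which simply contains $v$ --- then $\cluster_v(T_R)$ cannot reach distance $R$, a contradiction; hence on $\{t_c+\delta_R \le T_R < \infty\}$ no such \emph{burnt shield} around $v$ is present at time $t_c+\delta_R$. The theorem thus reduces to showing that the probability of there being no burnt shield around $v$ at time $t_c+\delta_R$ tends to $0$ as $R \to \infty$.

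This brings us to the half-plane analogue of the main mechanism behind Theorem~\ref{thm:main_result1} --- whose conclusion is essentially equivalent to the statement that, with probability tending to $1$ and by a slightly supercritical time, a burnt barrier encloses $0$ in $H_N$. Slightly above $t_c$, with the pure birth characteristic length large but $\ll R$, the configuration in the bulk is still close to near-critical percolation, so by Russo--Seymour--Welsh estimates, at each of roughly $\log R$ well-separated mesoscopic scales $m$ (with $1 \ll m \ll R$) there is, with probability bounded away from $0$, an occupied connected set that encircles $v$, reaches $\partial\HH$, and touches it at many points. The cone constructions of Section~\ref{sec:cone_sites} are the tool for making such encircling structures plentiful and for controlling how the boundary ignitions --- occurring at rate $\zeta > 0$ along $\partial\HH$, hence, at the many contact points of a mesoscopic structure, essentially instantly on the relevant time scale --- propagate to them; once such a structure is hit it burns and produces a shield. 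Using (near-)independence of these events across the roughly $\log R$ scales, via a successive-conditioning argument, the probability that none of them succeeds is at most $(1-q)^{c\log R} \to 0$; and choosing $\delta_R \downarrow 0$ so that all scales involved remain $\ll R$ and the ignition delays are negligible, a burnt shield around $v$ is present by time $t_c+\delta_R$ with probability $\to 1$, as required.

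The hard part will be exactly this last step: transporting the near-critical analysis of Theorem~\ref{thm:main_result1} to the half-plane, where --- unlike in the bounded domains $H_N$ --- there is no confining outer boundary, so the encirclement of an arbitrary bulk vertex must be driven purely by fires climbing up from $\partial\HH$, uniformly in the position of $v$. One also has to control the joint behaviour of the (random) scale and time at which the shield forms and the growth of $\cluster_v$ before that happens --- exactly what the decomposition above, combined with the cone estimates and a careful multi-scale (Borel--Cantelli type) bookkeeping, is meant to handle. Finally, one should address existence of the half-plane process in the version without recoveries, e.g.\ via approximation through finite domains, since Graf's construction was carried out under an extra hypothesis on the burning mechanism.
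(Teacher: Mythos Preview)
Your proposal is correct in outline and shares the paper's core mechanism: a multi-scale search, over dyadic semi-annuli in $\HH$, for a $t_c$-occupied semi-circuit that is reached by an unburnt cone arm and then ignited shortly after $t_c$, producing a burnt barrier that traps $\cluster_v$ forever. The difference lies in how the cluster is controlled \emph{before} that barrier forms.

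The paper does not introduce a scale $R$ or a time $\delta_R$ tied to it. Instead it fixes $\bar\ve>0$, picks $K$ so that a $t_c$-\emph{vacant} semi-circuit lies in $A^+_{k,K}$ with probability $\ge 1-\bar\ve/3$, then picks $\ul t>t_c$ so close to $t_c$ that with probability $\ge 1-\bar\ve/3$ no vertex of $A^+_{k,K}$ updates during $(t_c,\ul t)$ (so that vacant semi-circuit persists), and only \emph{then} picks $J$ large enough that some $t_c$-occupied semi-circuit in one of the annuli $A^+_{2^jK,2^{j+1}K}$, $1\le j\le J$, burns during $(t_c,\ul t)$ with probability $\ge 1-\bar\ve/3$. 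Before $\ul t$ the vacant circuit traps $\cluster_v$; after $\ul t$ the burnt circuit does. No supercritical one-arm estimate is needed, and the location of the burnt barrier (possibly very far out, at scale $2^{J+1}K$) is completely decoupled from the scale $K$ of the vacant barrier.

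Your route works too, but is more delicate: you must choose $\delta_R\to 0$ simultaneously satisfying (i) $\theta(p(t_c+\delta_R))\to 0$, (ii) $L(p(t_c+\delta_R))=o(R)$ so the finite-cluster one-arm to distance $R$ decays, and (iii) $\delta_R$ large enough that sufficiently many dyadic scales $m\lesssim R$ have cone-site triggering time $\lesssim\delta_R$ (roughly $m\gtrsim \delta_R^{-3/2}$). A choice like $\delta_R\asymp R^{-1/2}$ threads these constraints, but the paper's vacant-circuit trick avoids the balancing act entirely. On existence, the paper does not approximate by finite domains; it observes that the same proof applies verbatim to Graf's process (with the extra rule that infinite clusters burn instantly), whose existence is known from \cite{Gr2014}, and then notes that since no infinite cluster ever arises the extra rule is vacuous, so the two processes coincide.
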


We comment further on this theorem, and some of its intuitive implications, in Section~\ref{sec:intro_Graf}. As we explain briefly in Section~\ref{sec:connection_Graf}, we believe that the process with recoveries displays an analogous property, namely that no infinite cluster emerges before some universal time $\hat{t} > t_c$. However, additional technical difficulties arise in that setting, which are related, informally speaking, to the combined effect of recoveries near the real line. We plan to handle them in a future work.

\subsection{Discussion}

\subsubsection{Connections with Graf's model} \label{sec:intro_Graf}

In Graf's model, the forest corresponds to the upper half-plane in the triangular lattice, and there is, besides the ignitions from the boundary, an additional source of burnings (at least, theoretically): namely, as soon as an infinite occupied cluster emerges, it is burnt instantaneously.

This two-types-of-ignition model may look somewhat artificial, but it has a very natural motivation, as explained in the Introduction of \cite{Gr2016}. In the two papers \cite{Gr2014} and \cite{Gr2016}, Graf proved the existence, and several interesting properties, of that process. In particular he showed that a.s., each vertical column contains only finitely many sites that burnt before or at $t_c$, and infinitely many sites that burn after $t_c$.

However, the actual role of the second source of burnings in his model remained quite mysterious. More precisely, the question whether an infinite occupied cluster emerges at all remained open (it is listed as the second open problem in Section~2 of \cite{Gr2014}). For the version without recoveries, our Theorem~\ref{thm:main_resultH} implies a negative answer to that question. Our last paragraph of Section~\ref{sec:main_res} (as well as the first paragraph in Section~\ref{sec:connection_Graf} below) expresses our belief that there is a time $\hat t > t_c$ such that for Graf's process with recoveries, restricted to the time interval $[0,\hat t]$, the answer is negative too.

Theorems~\ref{thm:main_result1} and \ref{thm:main_result2}, besides being of independent interest, also have the following (more indirect) connection with Graf's work. As said in the Introduction of \cite{Gr2016}, Graf's original motivation came from the study of possible subsequential limits of the forest fire process on an $N$-by-$N$ box in the triangular lattice, with ignitions from the boundary of the box, as $N \to \infty$ (and with fixed center for the boxes). Our Theorems~\ref{thm:main_result1} and \ref{thm:main_result2} (or, rather, a simple modification of their proofs) imply that, for the case without recoveries, and for the case with recoveries but where time is restricted to the time interval $[0, \hat t]$, any subsequential limit process is simply the pure birth process.

\subsubsection{Process with impurities}

We want to conclude this introduction by mentioning another possible approach, which was developed in \cite{M2021}. The main result of that thesis, Theorem~4.11, is in some sense a weak version of our Theorem~\ref{thm:main_result1}. It does not imply that in the case we study (i.e. with fixed ignition rates) the probability that the center of the box burns tends to $0$, but that in the case where, roughly speaking, the ignition rates are of the form $N^{-\ve}$, the probability that the center does not burn is at least $C N^{-\ve}$, where $C = C(\ve) > 0$. We believe that a substantial refinement of a so-called arm-stability result involved in his method might lead to an alternative proof of our result \eqref{eq:main}, but that proof would be much more elaborate than that in the present paper.

The method in \cite{M2021} uses a percolation process with random impurities, attached independently along the boundary of the domain. This idea was directly inspired by the analogous process introduced in \cite{BN2021} (for the original Drossel-Schwabl process, i.e. ``bulk'' ignitions). In that paper, independent impurities are deleted from all over the lattice, in order to analyze the effect of fires taking place early (before the pure birth process enters an associated near-critical window). More precisely, the percolation process provides a stochastic lower bound, which can be used to ensure that with sufficient probability, some prescribed occupied (unburnt) connections exist in the forest fire process.

\begin{figure}[t]
\begin{center}

\includegraphics[width=.85\textwidth]{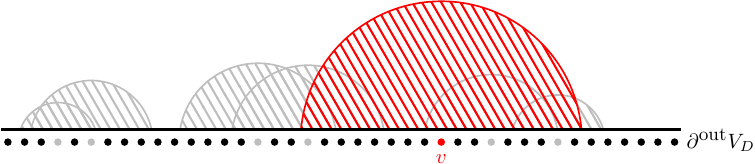}
\caption{\label{fig:impurities} In the process with impurities, each vertex along $\dout V_D$ (with $D$ the upper half-plane in this figure), such as the vertex denoted by $v$ here, can be the center of an impurity, with some given probability. When it is the case, then we remove, i.e. we turn vacant, all the vertices inside $V_D$ within a distance $R_v$ from $v$, where $R_v$ is a random variable with a suitable tail. We then study Bernoulli percolation in the remaining parts of $V_D$.}

\end{center}
\end{figure}

Let $D = (V_D, E_D)$ be a subdomain of $G$. In the process with impurities used in \cite{M2021} (illustrated in Figure~\ref{fig:impurities}), we let each boundary vertex $v \in \dout V_D$ be, independently of the others, the center of a ``hole'' (impurity) with a random radius $R_v$ satisfying, for some constant $c_0$,
$$\PP(R_v \geq k) \leq c_0 k^{-\frac{13}{12} + \ve} \quad (k \geq 1).$$
Note that the exponent $\frac{13}{12}$ already appeared in \cite{Gr2016} (as the sum $\frac{1}{\nu} + \rho$ in the notations of that paper, see Condition~3.1 there), and it will show up again multiple times in the present paper, for roughly the same reasons, notably in Lemma~\ref{lem:long_path}.

\subsection{Organization of the paper} \label{sec:intro_org}

In Section~\ref{sec:percolation}, we first set notations for Bernoulli percolation, focusing on the two-dimensional setting, and we also recall classical properties of that process, which are needed later for our proofs. Next, in Section~\ref{sec:cone_sites}, we develop results about the existence of cone sites, which play a key role in our subsequent reasonings. We then study, in Section~\ref{sec:ff_hexagon}, the forest fire process with boundary ignitions in a finite domain, establishing Theorems~\ref{thm:main_result1} and \ref{thm:main_result2}. Finally, in Section~\ref{sec:connection_Graf}, we analyze the process in the upper half-plane, proving in particular Theorem~\ref{thm:main_resultH}.

\section{Near-critical percolation in 2D} \label{sec:percolation}

In this section, we start by introducing notations for Bernoulli percolation in dimension two, in Section~\ref{sec:notation}. We then recall classical properties of two-dimensional percolation in Section~\ref{sec:classical_prop}, especially at and near its critical point, before stating results which are more specific to percolation in cones, in Section~\ref{sec:perc_cones}.

\subsection{Setting and notations} \label{sec:notation}

Let $\|.\|$ be the usual Euclidean distance in the plane $\RR^2$. In this paper, we work on the triangular lattice $\TT = (V, E)$, with set of vertices $V = V_{\TT} := \big\{ x + y e^{i \pi / 3} \in \CC \: : \: x, y \in \ZZ \big\}$ (we identify $\CC \simeq \RR^2$), and set of edges $E = E_{\TT} := \{ \{v, v'\} \: : \: v, v' \in V \text{ with } \|v - v'\| = 1 \}$. Two vertices $v$, $v' \in V$ such that $\|v - v'\| = 1$, i.e. $\{v, v'\} \in E$, are said to be neighbors, and we use the notation $v \sim v'$. A path is a finite sequence of vertices $v_0, v_1, \ldots, v_k$, for some $k \geq 1$ called the length of the path, such that $v_i \sim v_{i+1}$ for all $i = 0, \ldots, k-1$. Usually, we assume the vertices in this sequence to be distinct, i.e. the path does not use the same vertex twice. A circuit is such a path whose vertices are all distinct, except that $v_0 = v_k$. For $n \geq 0$, we let $B_n := \{ v \in V \: : \: \|v\| < n\}$ be the (open) ball of radius $n$, and for $0 \leq n_1 < n_2$, let $A_{n_1,n_2} := \{ v \in V \: : \: n_1 < \|v\| < n_2\}$ be the annulus with radii $n_1$ and $n_2$ (both centered at the origin $0$). For $v \in V$, we also denote $B_n(v) := v + B_n$ and $A_{n_1,n_2}(v) := v + A_{n_1,n_2}$. Finally, for a subset $A \subseteq V$, we denote $A^c := V \setminus A$, we define its inner (vertex) boundary by $\din A := \{v \in A \: : \: v \sim v'$ for some $v' \in A^c\}$, and its outer boundary by $\dout A := \din (A^c)$ ($= \{v \in A^c \: : \: v \sim v'$ for some $v' \in A\}$).

Bernoulli (site) percolation on the triangular lattice is obtained by tossing a coin for each vertex $v \in V$: for some given value $p \in [0,1]$ (the percolation parameter), $v$ is declared to be occupied or vacant, with respective probabilities $p$ and $1-p$, independently of the other vertices. This produces a percolation configuration $\omega = (\omega_v)_{v \in V}$ belonging to $\Omega := \{0,1\}^V$, which is equipped with the product measure $\PP_p$. In such an $\omega$, we say that two vertices $v, v' \in V$ are connected if we can find an occupied path from $v$ to $v'$. i.e. a path $v_0, v_1, \ldots, v_k$, for some $k \geq 0$, with $v_0 = v$, $v_k = v'$, and consisting only of occupied vertices. This is indicated by $v \lra v'$. Note that in particular, $v$ and $v'$ themselves have to be occupied. Two subsets of vertices $A$, $A' \subseteq V$ are said to be connected, denoted by $A \lra A'$, if we can find two vertices $v \in A$ and $v' \in A'$ which are connected. Occupied vertices can be grouped into connected components (i.e. classes for the equivalence relation $\lra$), that we call occupied clusters. We denote by $\calC(v) := \{v' \in V \: : \: v' \lra v\}$ the occupied cluster containing $v$ (so that $\calC(v) = \emptyset$ when $v$ is vacant). Moreover, we can also define vacant paths and vacant clusters in an analogous way, simply replacing occupied vertices by vacant ones.

Bernoulli percolation displays a phase transition as the parameter $p$ increases, in the following sense. We define $\theta(p) := \PP_p(0 \lra \infty)$, where for each $v \in V$, $v \lra \infty$ denotes the event that $\calC(v)$ is infinite. We also use the notation $\Cinf$ for the union of all infinite occupied clusters. Then the the special value $p_c = p_c^{\textrm{site}}(\TT)$, called the percolation threshold, satisfies the following. For all $p \leq p_c$, there exists no infinite occupied cluster, almost surely (a.s.). On the other hand, for all $p > p_c$, there exists a.s. an infinite cluster, which is furthermore unique. Moreover, it is now classical that $p_c^{\textrm{site}}(\TT) = \frac{1}{2}$ (the proof is an adaptation of \cite{Ke1980}, see Section~3.4 in \cite{Ke1982}). For more detailed background on Bernoulli percolation, the reader can consult the classical references \cite{Ke1982}, \cite{Gr1999}. 

We can consider rectangles on the lattice, which are subsets of the form $R = ([x_1,x_2] \times [y_1,y_2]) \cap V$, for $x_1 < x_2$ and $y_1 < y_2$. In particular, we sometimes use the notation $R_{n_1,n_2} := ( [0,n_1] \times [0,n_2] ) \cap V$ ($\subseteq \HH$) for $n_1, n_2 \geq 0$. Assuming that $R$ is non-empty, we can define in a natural way its left and right sides $\din_L R$ and $\din_R R$ (resp.), which are subsets of $\din R$. By definition, a horizontal (occupied) crossing of $R$ is an occupied path which stays in $R$, and connects $\din_L R$ and $\din_R R$. The event that such a path exists is denoted by $\Ch(R)$, and we can define in an analogous way vertical crossings (connecting the top and bottom sides), using the notation $\Cv(R)$ in this case. Furthermore, considering vacant paths instead, we obtain vacant horizontal and vertical crossings, and we denote their existence by (resp.) $\Ch^*(R)$ and $\Cv^*(R)$. Similarly, for an annulus $A = A_{n_1,n_2}(v)$ of the form above, we consider the event that there exists an occupied (resp. vacant) circuit around $A$, i.e. a circuit which remains in $A$ and surrounds $\Ball_{n_1}(v)$ once, and we denote it by $\calO(A)$ (resp. $\calO^*(A)$).

\subsection{Classical properties} \label{sec:classical_prop}

Our reasonings use primarily techniques and results developed to describe critical and near-critical Bernoulli percolation in dimension two, i.e. for values of the parameter $p$ which are sufficiently close to $p_c$. Indeed, as we explain in the subsequent sections, the relevant macroscopic behavior of our forest fire processes takes place at times when the density of trees in the forest is approximately critical, an instance of the phenomenon of self-organized criticality. More quantitatively, we measure the distance to criticality via the characteristic length $L$, which is defined as
\begin{equation} \label{eq:L_def}
L(p) := \min \big\{ n \geq 1 \: : \: \PP_p \big( \Cv( R_{2n,n} ) \big) \leq 0.001 \big\} \:\:\: \Big( p < \frac{1}{2} \Big), \text{ and } L(p) := L(1-p) \:\:\: \Big( p > \frac{1}{2} \Big).
\end{equation}
The classical Russo-Seymour-Welsh bounds at criticality ($p = \frac{1}{2}$) imply directly that $L(p) \to \infty$ as $p \to \frac{1}{2}$, so it is natural to set $L(\frac{1}{2}) := \infty$.


We make use of the following properties, which are now considered standard (see e.g. \cite{Ke1987} and \cite{No2008}).

\begin{enumerate}[(i)]
\item \emph{Russo-Seymour-Welsh bounds near criticality.} For all $K \geq 1$, there exists $\delta(K) > 0$ such that: for all $p \in (0,1)$, $1 \leq n \leq K L(p)$,
\begin{equation} \label{eq:RSW_bound}
\PP_p \big( \Ch( R_{4n,n} ) \big) \geq \delta \quad \text{and} \quad \PP_p \big( \Ch^*( R_{4n,n} ) \big) \geq \delta.
\end{equation}

\item \emph{Exponential decay property with respect to $L(p)$.} There exist universal constants $c_1, c_2 > 0$ such that: for all $p < \frac{1}{2}$, $n \geq 1$,
\begin{equation} \label{eq:exp_decay_L1}
\PP_p \big( \Cv( R_{4n,n} ) \big) \leq c_1 e^{- c_2 \frac{n}{L(p)}}
\end{equation}
(see Lemma~39 in \cite{No2008}). By duality, we have the following corresponding statement for $p > \frac{1}{2}$: for all $n \geq 1$,
\begin{equation} \label{eq:exp_decay_L2}
\PP_p \big( \Ch( R_{4n,n} ) \big) \geq 1 - c_1 e^{- c_2 \frac{n}{L(p)}}.
\end{equation}


\item \label{it:theta_L} \emph{Asymptotic estimates for $\theta$ and $L$.} Denote by $\pi_1(n)$ (resp. $\pi_4(n)$), $n \geq 0$, the probability at $p = \frac{1}{2}$ that there exists an occupied path (resp. four paths, which are occupied, vacant, occupied, and vacant, in counterclockwise order), connecting $0$ to distance $n$ (resp. each connecting a neighbor of $0$ to distance $n$). We have the following equivalences near $p_c$:
\begin{equation} \label{eq:theta_equiv}
\theta(p) \asymp \pi_1(L(p)) \quad \text{as $p \searrow \frac{1}{2}$,}
\end{equation}
(see Theorem 2 in \cite{Ke1987}, or (7.25) in \cite{No2008}), and
\begin{equation} \label{eq:L_equiv}
\big| p - p_c \big| L(p)^2 \pi_4 \big( L(p) \big) \asymp 1 \quad \text{as $p \to \frac{1}{2}$.}
\end{equation}
(see (4.5) in \cite{Ke1987}, or Proposition 34 in \cite{No2008}). Combined with the values of the one-arm exponent $\alpha_1 = \frac{5}{48}$ \cite{LSW2002} and the (polychromatic) four-arm exponent $\alpha_4 = \frac{5}{4}$ \cite{SW2001} (so that $\pi_j(n) = n^{-\alpha_j + o(1)}$ as $n \to \infty$, for $j=1,4$), these yield the critical exponents for $\theta$ and $L$:
\begin{equation} \label{eq:theta_L_exp}
\theta(p) = \Big( p-\frac{1}{2} \Big)^{\frac{5}{36} + o(1)} \quad \text{as $p \searrow \frac{1}{2}$,} \quad \text{and} \quad L(p) = \Big| p-\frac{1}{2} \Big|^{-\frac{4}{3} + o(1)} \quad \text{as $p \to \frac{1}{2}$.}
\end{equation}
\end{enumerate}

The characteristic length $L$ can be used to define \emph{near-critical parameter scales}, for each $n \geq 1$. These scales are formally defined as
$$p_{\lambda}(n) = \frac{1}{2} + \frac{\lambda}{n^2 \pi_4(n)}, \quad \lambda \in (-\infty,\infty),$$
where $\pi_4(.)$ denotes the four-arm probability at criticality (see (\ref{it:theta_L}) above), and they satisfy: for each fixed $\lambda \neq 0$, $L(p_{\lambda}(n)) \asymp n$ as $n \to \infty$. In particular, using that $\pi_4(n) = n^{-\frac{5}{4} + o(1)}$, we get that for each $\lambda \neq 0$, $p_{\lambda}(n) = \frac{1}{2} + \lambda n^{-\frac{3}{4} + o(1)}$. We will often use these scales implicitly: for example, if we consider the parameter $p_c + n^{-\beta}$, then $L(p_c + n^{-\beta}) \ll n$ or $\gg n$, depending on whether $\beta > \frac{3}{4}$ or $\beta < \frac{3}{4}$.

\subsection{Percolation in cones} \label{sec:perc_cones}

Let $\HH := \RR \times [0,\infty)$ be the closed upper half-plane. We write $V_{\HH} := V \cap \HH$, and we let $E_{\HH}$ be the corresponding set of edges. For $\alpha \in (0,\frac{\pi}{2}]$ and $v \in \din V_{\HH}$ (i.e. with $y$-coordinate equal to $0$), we denote by $\calC^{(\alpha)}(v)$ the intersection of $V_{\HH}$ with the closed cone with apex $v$ and opening angle $2 \alpha$ (see Figure~\ref{fig:cone_site} below). Note that $\calC^{(\alpha)}(v)$ contains $v$ by definition, and this yields a connected subgraph of $(V_{\HH},E_{\HH})$ for $\alpha \geq \frac{\pi}{6}$.

For $n \geq 0$, we consider the truncated cone $\calC_n^{(\alpha)}(v) := \calC^{(\alpha)}(v) \cap B_n(v)$. We denote by $\arm_1(\calC_n^{(\alpha)}(v))$ the event that there exists an occupied path connecting $v$ to $\din B_n(v)$ within the cone. In particular for $\alpha = \frac{\pi}{2}$, we get the usual one-arm event in the upper half-plane (and we need $\alpha \geq \frac{\pi}{6}$ to ensure $\calA_1(\calC_n^{(\alpha)}(v)) \neq \emptyset$). We write
$$\pi_1^{(\alpha)}(n) := \PP_{\frac{1}{2}}\big( \calA_1(\calC_n^{(\alpha)}(0)) \big) \quad (n \geq 0).$$
We also consider the sections $\calC_{n_1,n_2}^{(\alpha)}(v) := \calC^{(\alpha)}(v) \cap A_{n_1,n_2}(v)$, for $0 \leq n_1 < n_2$, and the event $\arm_1(\calC_{n_1,n_2}^{(\alpha)}(v))$ that there exists an occupied path ``crossing'' such a section. Formally, such a path is required to remain in $\calC_{n_1,n_2}^{(\alpha)}(v)$, and to connect two vertices $v_1$ and $v_2$, each having a neighbor outside the section, lying at a distance from $v$ which is, respectively, $\leq n_1$ and $\geq n_2$. We denote by
$$\pi_1^{(\alpha)}(n_1,n_2):= \PP_{\frac{1}{2}}\big( \calA_1(\calC_{n_1,n_2}^{(\alpha)}(0)) \big) \quad (0 \leq n_1 < n_2)$$
the one-arm probabilities at criticality in the cone $\calC^{(\alpha)}(0)$. Finally, as usual, we use the lighter notations $\calC^{(\alpha)} = \calC^{(\alpha)}(0)$, $\calC_n^{(\alpha)} = \calC_n^{(\alpha)}(0)$ and $\calC_{n_1,n_2}^{(\alpha)} = \calC_{n_1,n_2}^{(\alpha)}(0)$.

We will need the following results, which are more specific to cones. As we explain, they can all be easily obtained from standard results and reasonings.

\begin{enumerate}[(i)] \setcounter{enumi}{3}
\item \emph{Near-critical stability for one-arm events in a cone.} For all $\alpha \in (0,\frac{\pi}{2}]$, $K \geq 1$, there exist $C_1, C_2 > 0$ (depending only on $\alpha$ and $K$) such that: for all $p \in (0,1)$, $0 \leq n_1 < n_2 \leq K L(p)$,
\begin{equation} \label{eq:near_critical_arm}
c_1 \pi_1^{(\alpha)}(n_1, n_2) \leq \PP_p \big( \calA_1(\calC_{n_1,n_2}^{(\alpha)}) \big) \leq c_2 \pi_1^{(\alpha)}(n_1, n_2).
\end{equation}
This result can be obtained through similar reasonings as Theorem~27 in \cite{No2008} (which regards arm events in the full plane).

\item \emph{Estimates on $\pi_1^{(\alpha)}$.} For all $\alpha \in (0,\frac{\pi}{2}]$, there exist $c_1, c_2 > 0$ (depending only on $\alpha$) such that: for all $0 \leq n_1 < n_2$,
\begin{equation} \label{eq:one_arm_ext}
\pi_1^{(\alpha)} \bigg( \frac{n_1}{2}, n_2 \bigg), \: \pi_1^{(\alpha)}(n_1, 2 n_2) \geq c_1 \pi_1^{(\alpha)}(n_1, n_2),
\end{equation}
and for all $0 \leq n_1 < n_2 < n_3$,
\begin{equation} \label{eq:one_arm_qm}
c_1 \pi_1^{(\alpha)}(n_1, n_3) \leq \pi_1^{(\alpha)}(n_1, n_2) \pi_1^{(\alpha)}(n_2, n_3) \leq c_2 \pi_1^{(\alpha)}(n_1, n_3).
\end{equation}
The first property shows the ``extendability'' of the arm in a cone, and the second property is usually called quasi-multiplicativity. Even though substantial work is required to establish them in the case of general polychromatic arm events in the plane (see, resp., Propositions~16 and 17 in \cite{No2008}), in this particular situation of a single occupied arm, they are almost direct consequences of the Russo-Seymour-Welsh bounds at criticality, used in combination with the Harris inequality.

\item \emph{One-arm exponent in a cone.} For all $\alpha \in (0,\frac{\pi}{2}]$, let
\begin{equation} \label{eq:one_arm_cone_exp}
\alpha_1^{(\alpha)} = \frac{\pi}{2 \alpha} \cdot \frac{1}{3}.
\end{equation}
Then, for all $\ve > 0$, there exist $c_i(\alpha,\ve) > 0$, $i=1,2$, such that: for all $1 \leq n_1 < n_2$,
\begin{equation} \label{eq:one_arm_cone_prob}
c_1 \bigg( \frac{n_1}{n_2} \bigg)^{\alpha_1^{(\alpha)} + \ve} \leq \pi_1^{(\alpha)}(n_1,n_2) \leq c_1 \bigg( \frac{n_1}{n_2} \bigg)^{\alpha_1^{(\alpha)} - \ve}.
\end{equation}
This exponent can be obtained from the conformal invariance property of critical percolation in the scaling limit \cite{Sm2001}.
\end{enumerate}

The following a-priori estimate will be useful.

\begin{lemma} \label{lem:apriori_sum}
For all $\alpha > \frac{\pi}{6}$, we have: for all $n \geq 1$,
\begin{equation} \label{eq:apriori_sum}
\sum_{k=1}^n \big( \pi_1^{(\alpha)}(k,n) \big)^{-1} \leq C n,
\end{equation}
where $C$ depends only on $\alpha$.
\end{lemma}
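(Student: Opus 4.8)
The plan is to bound the sum $\sum_{k=1}^n \big(\pi_1^{(\alpha)}(k,n)\big)^{-1}$ by comparing it to a geometric series indexed by dyadic scales. First I would fix $\alpha > \frac{\pi}{6}$ and observe that the key parameter controlling everything is the cone one-arm exponent $\alpha_1^{(\alpha)} = \frac{\pi}{2\alpha}\cdot\frac{1}{3}$ from \eqref{eq:one_arm_cone_exp}. Since $\alpha > \frac{\pi}{6}$, we have $\frac{\pi}{2\alpha} < \frac{3}{2}$, hence $\alpha_1^{(\alpha)} < \frac{1}{2} < 1$. The point is that $\big(\pi_1^{(\alpha)}(k,n)\big)^{-1}$ grows like $(n/k)^{\alpha_1^{(\alpha)}}$, and since this exponent is strictly less than $1$, the sum $\sum_{k=1}^n (n/k)^{\alpha_1^{(\alpha)}}$ is dominated by its top term (those $k$ comparable to $n$), giving $O(n)$. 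The subexponent $1$ threshold is precisely why the hypothesis $\alpha > \frac{\pi}{6}$ is needed.

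To make this rigorous without losing a factor of $n^\epsilon$, I would avoid \eqref{eq:one_arm_cone_prob} (which carries an $\epsilon$) and instead use quasi-multiplicativity \eqref{eq:one_arm_qm} together with a uniform bound on $\pi_1^{(\alpha)}$ over a single dyadic scale. Concretely: group the indices $k \in \{1,\dots,n\}$ into dyadic blocks $k \in [n/2^{j+1}, n/2^{j}]$ for $j = 0, 1, \dots, \lceil \log_2 n\rceil$. For $k$ in such a block, writing $k$ on the scale $\sim n 2^{-j}$, quasi-multiplicativity \eqref{eq:one_arm_qm} applied repeatedly (chaining through the scales $n2^{-j}, n2^{-j+1}, \dots, n$) gives $\pi_1^{(\alpha)}(k,n) \geq c\, \rho^{j}$ for a constant $\rho = \rho(\alpha) \in (0,1)$ coming from the per-scale lower bound $\pi_1^{(\alpha)}(m,2m)\geq \rho$ (which itself follows from RSW at criticality plus Harris, as noted in the excerpt, or can be extracted from \eqref{eq:one_arm_qm} and \eqref{eq:one_arm_ext}). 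Hence $\big(\pi_1^{(\alpha)}(k,n)\big)^{-1} \leq C \rho^{-j}$ for all $k$ in block $j$. Since block $j$ contains at most $n 2^{-j}$ integers, the contribution of block $j$ is at most $C n 2^{-j}\rho^{-j}$, and summing over $j$ gives $Cn\sum_j (2\rho)^{-j} \cdot \dots$ — wait, this only converges if $2\rho > 1$, i.e. $\rho > 1/2$, which is exactly the condition that $\alpha_1^{(\alpha)}<1$, i.e. $\alpha > \frac{\pi}{6}$; so the geometric sum $\sum_j (2\rho)^{-j}$ converges to a constant, yielding $\sum_k \big(\pi_1^{(\alpha)}(k,n)\big)^{-1}\leq Cn$.

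The one technical wrinkle I would need to handle carefully is the relation between the per-scale constant $\rho$ and the exponent $\alpha_1^{(\alpha)}$: I must ensure that the $\rho$ produced by RSW-at-criticality/Harris genuinely satisfies $\rho > 1/2$ when $\alpha > \frac{\pi}{6}$. The cleanest way is not to try to get the sharp $\rho$ from RSW directly, but rather to invoke \eqref{eq:one_arm_cone_prob} with a small fixed $\epsilon$: choose $\epsilon = \epsilon(\alpha) > 0$ small enough that $\alpha_1^{(\alpha)} + \epsilon < 1$ (possible since $\alpha > \frac{\pi}{6}$ gives $\alpha_1^{(\alpha)} < 1$ with room to spare). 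Then \eqref{eq:one_arm_cone_prob} gives $\big(\pi_1^{(\alpha)}(k,n)\big)^{-1} \leq c^{-1}(n/k)^{\alpha_1^{(\alpha)}+\epsilon}$ directly, and $\sum_{k=1}^n (n/k)^{\alpha_1^{(\alpha)}+\epsilon} = n^{\alpha_1^{(\alpha)}+\epsilon}\sum_{k=1}^n k^{-(\alpha_1^{(\alpha)}+\epsilon)} \leq n^{\alpha_1^{(\alpha)}+\epsilon}\cdot C' n^{1-\alpha_1^{(\alpha)}-\epsilon} = C'n$, where the bound on $\sum_{k=1}^n k^{-s}$ for $s\in(0,1)$ is the standard $\sum_{k=1}^n k^{-s} \leq \frac{n^{1-s}}{1-s} + 1 \leq \frac{2}{1-s}n^{1-s}$.

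The main obstacle is essentially bookkeeping rather than any deep difficulty: making sure the constant $C$ depends only on $\alpha$ (and not on the auxiliary $\epsilon$, which is itself a function of $\alpha$, so this is fine) and that the edge cases $k$ close to $n$ — where $\pi_1^{(\alpha)}(k,n)$ is bounded below by a constant by monotonicity or by \eqref{eq:one_arm_ext}/\eqref{eq:one_arm_qm} — are handled so the displayed estimate \eqref{eq:one_arm_cone_prob} (which requires $1 \leq n_1 < n_2$, strict) applies. For $k = n$ there is no term, and for $k$ within a constant factor of $n$ one uses boundedness of $\pi_1^{(\alpha)}$ from below on a bounded range of scales. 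I expect no genuine difficulty beyond choosing $\epsilon$ correctly and invoking the right estimate; the conceptual content is entirely the observation that $\alpha > \frac{\pi}{6} \iff \alpha_1^{(\alpha)} < 1 \iff$ the sum is $O(n)$ rather than a larger power.
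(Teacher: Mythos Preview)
Your proposal---specifically the second route, invoking \eqref{eq:one_arm_cone_prob} with a small fixed $\ve$ so that $\alpha_1^{(\alpha)}+\ve<1$ and then summing $\sum_{k=1}^n (n/k)^{\alpha_1^{(\alpha)}+\ve}$---is exactly the paper's proof (the paper just makes the specific choice $\alpha_1^{(\alpha)}+2\ve=1$). One small arithmetic slip: $\alpha>\frac{\pi}{6}$ gives $\frac{\pi}{2\alpha}<3$ (not $\frac{3}{2}$), hence $\alpha_1^{(\alpha)}<1$ (not $\frac{1}{2}$), but since your subsequent argument only uses $\alpha_1^{(\alpha)}<1$ this is harmless.
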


\begin{proof}
Let $\alpha > \frac{\pi}{6}$, and consider the corresponding exponent $\alpha_1^{(\alpha)}$, which we know is $< 1$ (from \eqref{eq:one_arm_cone_exp}). Hence, we can let $\ve > 0$ so that $\alpha_1^{(\alpha)} + 2 \ve = 1$. From \eqref{eq:one_arm_cone_prob}, we have
$$\pi_1^{(\alpha)}(k,n) \geq c_1 \bigg( \frac{k}{n} \bigg)^{\alpha_1^{(\alpha)} + \ve} = c_1 \bigg( \frac{k}{n} \bigg)^{1 - \ve}$$
for some $c_1(\alpha) > 0$. We deduce immediately
\begin{equation}
\sum_{k=1}^n \big( \pi_1^{(\alpha)}(k,n) \big)^{-1} \leq (c_1)^{-1} n^{1 - \ve} \cdot \sum_{k=1}^n k^{-(1 - \ve)} \leq (c_1)^{-1} n^{1 - \ve} \cdot c'_1 n^{\ve} = C n,
\end{equation}
which gives \eqref{eq:apriori_sum}, and completes the proof.
\end{proof}

\section{Cone sites} \label{sec:cone_sites}

In this section, we develop the main geometric idea used in our proofs: we move away from the boundary by considering paths contained in cones, included in $\HH$ and with an opening angle just slightly below $\pi$.

In the following, we consider the pure birth process in $\HH$ (with rate $1$), i.e. Bernoulli percolation with parameter $p(t) = 1 - e^{-t}$. Moreover, we fix some $\zeta > 0$, and we ``trigger'' the vertices along $\dout V_{\HH}$, with $y$-coordinate $- \frac{\sqrt{3}}{2}$, according to a Poisson process with rate $\zeta$. Each time such a vertex is triggered, we consider the two vertices in $V_{\HH}$ above it: for each of them, if it is occupied, we put a mark on all occupied vertices connected to it at this time (and otherwise, if it is vacant, nothing happens). For each $t \geq 0$, we then denote by $\calF_t$ the set of all vertices carrying a mark at time $t$, which was thus transmitted by a vertex triggered at an earlier time.

We start by the following observation. For $v \in \din V_{\HH}$ and $n \geq 1$, we denote $F_n(v) := \{$one of the two neighbors of $v$ on $\dout V_{\HH}$ gets triggered at some time $t \in [0,t_c]$, at which $\calA_1(\calC_n^{(\frac{\pi}{2})}(v))$ occurs$\}$. In other words, $F_n(v)$ is the event that in the pure birth process, there exists an occupied path (in $V_{\HH}$), which is ignited by a neighbor of $v$ before time $t_c$ and reaches distance $n$ from $v$. We have the following estimate.

\begin{lemma} \label{lem:long_path}
Let $\zeta \in (0,\infty)$, and $\ve > 0$. There exists $c(\ve)$ such that for all $v \in \din V_{\HH}$ and $n \geq 1$,
\begin{equation} \label{eq:long_path}
\PP \big( F_n(v) \big) \leq c \zeta n^{-\frac{13}{12} + \ve}.
\end{equation}
\end{lemma}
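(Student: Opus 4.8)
The plan is to bound $\PP(F_n(v))$ by a union bound over the (finitely many) triggering times before $t_c$ and over the ``depth'' at which the ignited cluster lives, then to use the cone arm estimates. Fix $v$, and by translation invariance assume $v=0$. The event $F_n(v)$ requires that at some triggering time $t\le t_c$ of a neighbor of $v$ on $\dout V_{\HH}$, there is an occupied path from $v$ to $\din B_n(v)$ inside $\calC_n^{(\pi/2)}(v)$. The first simplification: since the pure birth process is increasing in $t$, and $p(t_c)=p_c=\tfrac12$, the occupied configuration at any time $t\le t_c$ is dominated by the critical configuration; so conditionally on the set of triggering times $\mathcal T\cap[0,t_c]$, the probability that the required arm exists \emph{at} a given triggering time is at most $\PP_{1/2}(\calA_1(\calC_n^{(\pi/2)}))=\pi_1^{(\pi/2)}(n)$. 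However, a naive union bound over triggering times costs a factor equal to the (random) number of triggers, whose expectation is $\asymp \zeta t_c$; that only gives $\PP(F_n(v))\le C\zeta\,\pi_1^{(\pi/2)}(n)$, and $\pi_1^{(\pi/2)}(n)=n^{-1/3+o(1)}$, which is far from the claimed $n^{-13/12+\ve}$. So the triggering-time union bound must be done more carefully, exploiting that for the ignition to propagate far, the cluster must already be large at that time, which is unlikely well before $t_c$.

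The key quantitative input is the near-critical window. Introduce the near-critical scales $p_\lambda(m)=\tfrac12+\lambda/(m^2\pi_4(m))$. For the arm to reach distance $n$ at time $t$, we need (roughly) $L(p(t))\gtrsim n$, i.e. $t$ must be within $O(1/(n^2\pi_4(n)))=O(n^{-3/4+o(1)})$ of $t_c$ — and more precisely, to reach distance $n$ with non-negligible probability we essentially need $p(t)$ in the near-critical window around scale $n$. More usefully, partition $[0,t_c]$ dyadically by distance to $t_c$: for $j\ge 0$ let $I_j$ be the set of times $t$ with $t_c-t\in[2^{-j-1},2^{-j})$ (down to $j$ of order $\log n$), and let $m_j:=L(p(\sup I_j))$, which is the characteristic length at the ``near-critical end'' of $I_j$, so $m_j\asymp (2^{-j})^{-4/3+o(1)}$. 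Within times in $I_j$, the occupied configuration is subcritical with characteristic length $\asymp m_j$, hence by the exponential decay property \eqref{eq:exp_decay_L1} (via a standard chaining of $\Cv$-crossings of annuli of radius $\asymp m_j$ stacked between radius $m_j$ and $n$) the probability that an occupied arm reaches distance $n$ is at most $\exp(-c\,n/m_j)$ when $m_j\ll n$, and at most $\pi_1^{(\pi/2)}(n)\asymp n^{-1/3+o(1)}$ (by \eqref{eq:near_critical_arm}) when $m_j\gtrsim n$. The expected number of triggers in $I_j$ is $\asymp \zeta\,2^{-j}$. Summing the contributions $\zeta\,2^{-j}\cdot(\text{arm prob in }I_j)$ over $j$: the terms with $m_j\ll n$ decay doubly fast and are negligible; the dominant contribution comes from $j$ with $m_j\asymp n$, i.e. $2^{-j}\asymp n^{-3/4+o(1)}$, contributing $\zeta\cdot n^{-3/4+o(1)}\cdot n^{-1/3+o(1)}=\zeta\,n^{-13/12+o(1)}$. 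This is exactly the claimed exponent $\tfrac{13}{12}$, and it is the sum of the near-critical shift exponent $\tfrac34$ and the upper-half-plane one-arm exponent $\tfrac13$, matching the remark in the text that $\tfrac{13}{12}=\tfrac1\nu+\rho$.

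To make this rigorous I would: (1) condition on the Poisson triggering process and use its independence from the percolation configuration; (2) for a fixed trigger time $t\in I_j$, bound $\PP(\calA_1(\calC_n^{(\pi/2)})\text{ at time }t)$ — when $m_j\le n$, by stacking roughly $n/m_j$ independent-ish annuli $A_{2^i m_j,2^{i+1}m_j}$ and applying \eqref{eq:exp_decay_L1} to get $\le c_1 e^{-c_2 n/m_j}$ (here one should use crossing events of fixed aspect ratio so RSW/exponential decay applies cleanly, and FKG to glue), and when $m_j> n$, by \eqref{eq:near_critical_arm} to get $\le c\,\pi_1^{(\pi/2)}(n)$; (3) multiply by $\EE[\#\text{triggers in }I_j]\asymp\zeta\,2^{-j}$; (4) sum over $j$ from $0$ to $O(\log n)$, plus the negligible tail $t\in[0,t_c-1]$ where the configuration is uniformly subcritical with bounded characteristic length so the arm probability is $e^{-cn}$; (5) absorb the $n^{o(1)}$ and $\log n$ factors into the $n^{\ve}$ by choosing the dyadic bookkeeping with a slightly larger exponent, using \eqref{eq:theta_L_exp} for $L(p)=|p-\tfrac12|^{-4/3+o(1)}$ and $\pi_1^{(\pi/2)}(n)=n^{-1/3+o(1)}$ from \eqref{eq:one_arm_cone_prob} with $\alpha=\tfrac\pi2$, $\alpha_1^{(\pi/2)}=\tfrac13$. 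The main obstacle is step (2) in the regime $m_j\ll n$: getting a genuine exponential bound $e^{-cn/m_j}$ on the half-plane arm probability rather than a mere power law, so that these contributions sum to something negligible and do not overwhelm the target exponent; this is where the exponential decay property \eqref{eq:exp_decay_L1} together with a careful annulus-chaining (and attention to the cone/half-plane boundary, which only helps) does the work. A secondary bookkeeping nuisance is that $t\mapsto p(t)=1-e^{-t}$ and $t\mapsto L(p(t))$ are not exact power maps, so all scale identifications hold only up to $n^{o(1)}$, which is why the statement carries the $+\ve$ and why one must be slightly generous throughout.
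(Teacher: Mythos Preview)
Your overall strategy is the same as the paper's: decompose $[0,t_c]$ dyadically by distance to $t_c$, identify the characteristic length $m_j$ on each piece, and combine exponential decay below scale $n$ with the half-plane one-arm probability near scale $n$. However, there is a genuine gap in step~(2). In the regime $m_j\le n$ you bound the arm probability at a trigger time in $I_j$ by $c_1 e^{-c_2 n/m_j}$ alone. For the value of $j$ at the threshold (where $m_j$ is of order $n$, say $m_j\in[n/2,n]$), this exponential is of order $1$, while $|I_j|\asymp 2^{-j}\asymp n^{-3/4+o(1)}$; that single term therefore contributes $\asymp \zeta\, n^{-3/4+o(1)}$ to the sum, which is strictly larger than the target $\zeta\, n^{-13/12+\ve}$. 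Your claim that ``the terms with $m_j\ll n$ decay doubly fast and are negligible'' is thus not correct as stated: the terms with $m_j$ \emph{much} smaller than $n$ are indeed negligible, but the ones with $m_j$ just below $n$ are not.

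The fix, which is exactly what the paper does, is to retain the one-arm factor in the subcritical regime as well: at a time $t\in I_j$ with $L(p(t))\asymp m_j\le n$, an occupied arm to distance $n$ implies in particular an occupied arm to distance $m_j$, whose probability is $\lesssim \pi_1^{(\pi/2)}(m_j)$ by near-critical stability \eqref{eq:near_critical_arm}, \emph{and} a crossing from scale $m_j$ to scale $n$, contributing the exponential factor. The correct bound on the arm probability is therefore $\pi_1^{(\pi/2)}(m_j)\cdot c_1 e^{-c_2 n/m_j}$; this is precisely the shape of the summand in the paper's display \eqref{eq:sum_pf1}. With the extra factor $\pi_1^{(\pi/2)}(m_j)=m_j^{-1/3+o(1)}$, each term is at most of order $\zeta\, m_j^{-3/4-1/3+o(1)} e^{-c\, n/m_j}$, and the dyadic sum over $m_j\le n$ is dominated by $m_j\asymp n$ and yields $\zeta\, n^{-13/12+o(1)}$, as required.
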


Establishing this upper bound \eqref{eq:long_path} requires a little bit of care, and we do it below. However, note that it is easy to explain heuristically the exponent $\frac{13}{12}$ that appears, as follows. For a connection to distance $n$ to have a reasonable probability to form, the parameter $p(t) \leq p_c$ needs to be such that $L(p(t)) \gtrsim n$. In other words, the ignition has to take place in the corresponding near-critical window, which has length $n^{-\frac{3}{4} + o(1)}$ (using the critical exponent $\frac{4}{3}$ for $L$, see \eqref{eq:theta_L_exp}). In this case, there exists an occupied path to distance $n$ with a probability $n^{-\frac{1}{3} + o(1)}$, from \eqref{eq:one_arm_cone_exp} and \eqref{eq:one_arm_cone_prob}. Hence, we obtain $\approx \zeta n^{-\frac{3}{4} + o(1)} \cdot n^{-\frac{1}{3} + o(1)} = \zeta n^{-\frac{13}{12} + o(1)}$.

Let us now prove this estimate formally.

\begin{proof}
For notational convenience, we write $\pi_1^+ = \pi_1^{(\frac{\pi}{2})}$ in this proof (only). Note that \eqref{eq:one_arm_cone_exp} provides the value of the corresponding exponent:
\begin{equation} \label{eq:exp_hp}
\alpha_1^+ = \alpha_1^{(\frac{\pi}{2})} = \frac{1}{3}.
\end{equation}
The lemma follows from a summation argument similar to those in the proofs of Lemma~6.8 in \cite{BN2021} and Lemma~3.3 in \cite{LN2021}. Let $\delta > 0$ be such that $L(t_c - \delta) = n$. Without loss of generality, we can assume that $n$ is large enough so that $t_c - \delta > \frac{3}{4} t_c$, and introduce the integer $J \geq 0$ satisfying
$$t_c - 2^{J+1} \delta \leq \frac{3}{4} t_c < t_c - 2^J \delta.$$
Observe that in particular, we have necessarily $t_c - 2^{J+1} \delta > \frac{1}{2} t_c$.

We then bound the desired probability from above by summing according to the subinterval $[t_c - 2^{j+1} \delta, t_c - 2^j \delta)$, $0 \leq j \leq J$, containing the time $t$ at which one of the neighbors of $v$ gets triggered (note that there might be several such times, but this is not an issue). If $t \in [t_c-\delta, t_c]$, we have $L(t) \geq n$, and we can simply use $2 \zeta \delta \pi_1^+(n)$ as an upper bound. If $t \in (0, t_c - 2^{J+1} \delta)$, we use the upper bound $2 \zeta t_c c_1 e^{-c_2 n/L(\frac{3}{4} t_c)}$ (coming from \eqref{eq:exp_decay_L1}). Hence, we obtain from the union bound:
\begin{equation} \label{eq:sum_pf1}
\PP \big( F_n(v) \big) \leq 2 \zeta \bigg( \delta \pi_1^+(n) + \sum_{j=0}^J 2^j \delta \pi_1^+ \big( L(t_c - 2^{j+1} \delta) \big) 4 c_1 e^{-c_2 n/L(t_c - 2^j \delta)} + t_c c_1 e^{-c_2 n/L(\frac{3}{4} t_c)} \bigg).
\end{equation}
We will use that the following bounds hold for $L(t_c - 2^j \delta)$: there exist constants $c'_1, c'_2 > 0$ (depending only on $\ve$) such that for all $j \in \{0, \ldots, J\}$,
\begin{equation} \label{eq:ratio_L}
c'_1 (2^j)^{-\frac{4}{3} - \ve} n \leq L(t_c - 2^j \delta) \leq c'_2 (2^j)^{-\frac{4}{3} + \ve} n.
\end{equation}
Indeed, this follows by writing
$$L(t_c - 2^j \delta) = \frac{L(t_c - 2^j \delta)}{L(t_c - \delta)} L(t_c - \delta) = \frac{L(t_c - 2^j \delta)}{L(t_c - \delta)} n,$$
and estimating the ratio above by using \eqref{eq:L_equiv}, combined with classical bounds on the four-arm (full-plane) probability $\pi_4(n_1,n_2)$ (see e.g. Lemma~2.5 in \cite{BN2021}, and also the quasi-multiplicativity property for $\pi_4$, which is (2.6) in that paper).

Consider some $j \in \{0,\ldots,J\}$. On the one hand,
\begin{equation} \label{eq:sum_pf2}
\pi_1^+ \big( L(t_c - 2^{j+1} \delta) \big) \leq c_3 \pi_1^+ \big( L(t_c - \delta) \big) \pi_1^+ \big( L(t_c - 2^{j+1} \delta), L(t_c - \delta) \big)^{-1} \leq c'_3 (2^{j+1})^{\frac{4}{3} \cdot \frac{1}{3} + \ve} \pi_1^+ (n),
\end{equation}
using \eqref{eq:one_arm_qm}, and then \eqref{eq:one_arm_cone_prob} (combined with the value from \eqref{eq:exp_hp}, as well as \eqref{eq:ratio_L}). Here, $c_3$ is universal, and $c'_3$ depends only on $\ve$. On the other hand,
\begin{equation} \label{eq:sum_pf3}
e^{-c_2 n/L(t_c - 2^j \delta)} \leq e^{-c''_2 (2^j)^{\frac{4}{3} - \ve}}.
\end{equation}
(from \eqref{eq:ratio_L}), for some $c''_2(\ve) > 0$. By combining \eqref{eq:sum_pf1}, \eqref{eq:sum_pf2} and \eqref{eq:sum_pf3}, we obtain
$$\PP \big( F_n(v) \big) \leq 2 \zeta \bigg( \delta \pi_1^+(n) + \delta \pi_1^+(n) \sum_{j \geq 0} c'_3 (2^{j+1})^{\frac{4}{9} + \ve} e^{-c''_2 (2^j)^{\frac{4}{3} - \ve}} + t_c c_1 e^{-c_2 n/L(\frac{3}{4} t_c)} \bigg).$$
This allows us to conclude, using finally that for some constants $c_4, c'_4, c_5$ depending only on $\ve$, $n = L(t_c - \delta) \leq c_4 \delta^{-\frac{4}{3}/(1-\frac{\ve}{2})}$, so $\delta \leq c'_4 n^{-\frac{3}{4} + \frac{\ve}{2}}$, and $\pi_1^+(n) \leq c_5 n^{- \frac{1}{3} + \frac{\ve}{2}}$ (from \eqref{eq:one_arm_cone_prob} and \eqref{eq:exp_hp}):
\begin{equation} \label{eq:Fn_upper_bd}
\PP \big( F_n(v) \big) \leq c \zeta n^{-\frac{3}{4} + \frac{\ve}{2}} n^{- \frac{1}{3} + \frac{\ve}{2}} = c \zeta n^{-\frac{13}{12} + \ve}.
\end{equation}
We have thus established \eqref{eq:long_path}, which completes the proof.
\end{proof}

We are now ready to introduce cone sites. We adopt the definition below, illustrated on Figure~\ref{fig:cone_site}.

\begin{definition} \label{def:cone_site}
Let $\zeta \in (0,\infty)$. Let $\alpha > \frac{\pi}{6}$, and $n > 0$. A vertex $v \in \din V_{\HH}$ is called an \emph{$(\alpha,n)$-cone site} if the two conditions below are satisfied:
\begin{enumerate}[(i)]
\item $\calF_{t_c} \cap \calC^{(\alpha)}(v) = \emptyset$,

\item and $\calA_1(\calC_n^{(\alpha)}(v))$ occurs at $t_c$, i.e. there exists (in the pure birth process) an occupied arm connecting $v$ to distance $n$ within the cone $\calC^{(\alpha)}(v)$.
\end{enumerate}
Note that in particular, $v$ has to be occupied.
\end{definition}

\begin{figure}[t]
\begin{center}

\includegraphics[width=.85\textwidth]{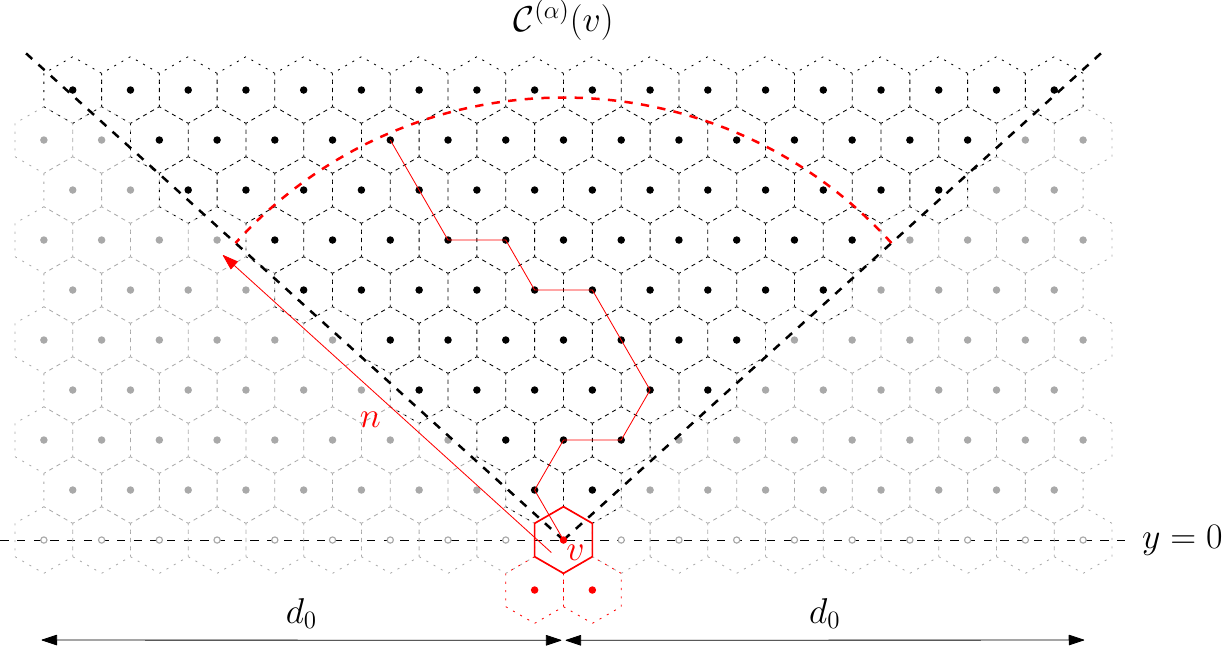}
\caption{\label{fig:cone_site} For some $\alpha \in (\frac{\pi}{6}, \frac{\pi}{2})$ and $v \in \din V_{\HH}$, the cone $\calC^{(\alpha)}(v)$ is shown in black. This figure depicts the conditions for a vertex $v \in \din V_{\HH}$, i.e. with $y$-coordinate equal to $0$, to be an $(\alpha,n)$-cone site, $n \geq 0$: (i) There is no occupied path which is ignited before time $t_c$ and enters the cone; (ii) There exists a $t_c$-occupied path connecting $v$ to distance $n$ within $\calC^{(\alpha)}(v)$. The two neighboring vertices below $v$, colored in red, can then be used as ``triggers''. We prove that the probability for $v$ to be a cone site is comparable to $\pi_1^{(\alpha)}(n) = \PP_{\frac{1}{2}}( \calA_1(\calC_n^{(\alpha)}) )$: the upper bound is clear, and the lower bound is obtained by making the further requirement that all the vertices along $\din V_{\HH}$ at a distance at most $d_0$ from $v$ (marked with white disks) are $2t_c$-vacant, for some well-chosen $d_0(\alpha)$. In terms of probability, this extra condition has a constant cost, once $d_0$ is fixed.}

\end{center}
\end{figure}

Later in the paper, we use this notion twice, or rather small variations of it, in two different situations. First, in Section~\ref{sec:ff_hexagon}, cone sites are used to study the spread of ignitions in large finite domains. And then in Section~\ref{sec:connection_Graf}, we explain how cone sites can be used to gain further insight on Graf's forest fire process in the upper half-plane. We will make use of the following estimate.

\begin{lemma} \label{lem:cone_est}
Let $\zeta \in (0,\infty)$. For any $\alpha \geq \frac{\pi}{6}$, we have the following estimates.
\begin{enumerate}[(i)]
\item There exists $c_1(\alpha,\zeta) > 0$ such that: for all $v \in \din V_{\HH}$ and $n \geq 0$,
\begin{equation} \label{eq:cone_est1}
c_1 \pi_1^{(\alpha)}(n) \leq \PP \big( v \text{ is an $(\alpha,n)$-cone site} \big) \leq \pi_1^{(\alpha)}(n).
\end{equation}

\item There exists $c_2(\alpha) > 0$ such that: for all $v, v' \in \din V_{\HH}$ and $n \geq 0$,
\begin{equation} \label{eq:cone_est2}
\PP \big( v \text{ and } v' \text{ are both $(\alpha,n)$-cone sites} \big) \leq c_2 \pi_1^{(\alpha)}(n) \cdot \pi_1^{(\alpha)}(\|v-v'\| \wedge n).
\end{equation}
\end{enumerate}
\end{lemma}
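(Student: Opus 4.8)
\textbf{Plan for the proof of Lemma~\ref{lem:cone_est}.}

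The plan is to prove both bounds by relating the cone-site event to the single-arm event $\calA_1(\calC_n^{(\alpha)}(v))$ in the pure birth process at criticality, and to control the effect of condition~(i) (no early ignition enters the cone).

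For the upper bound in part~(i), this is immediate: being an $(\alpha,n)$-cone site requires in particular that $\calA_1(\calC_n^{(\alpha)}(v))$ occurs at time $t_c$, which has probability exactly $\pi_1^{(\alpha)}(n)$ by definition (the pure birth configuration at $t_c$ being critical percolation). For the lower bound, I would follow the strategy sketched in the caption of Figure~\ref{fig:cone_site}: impose the extra requirement that all vertices along $\din V_{\HH}$ within distance $d_0$ of $v$ are $2t_c$-vacant, for a well-chosen $d_0 = d_0(\alpha)$. First, fixing $d_0$, this extra condition has only a constant cost, since it concerns finitely many vertices; more precisely, it forces these boundary vertices to be vacant at the (fixed) time $2t_c > t_c$, and by the FKG/Harris inequality applied at a single time this costs at least a constant factor (one must be slightly careful here since we are dealing with the dynamical process, but the configuration at any fixed time is just Bernoulli percolation, and the events $\{v \text{ vacant at } 2t_c\}$ are decreasing, hence positively correlated with the decreasing event ``no ignition enters the cone''). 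The point of forcing a vacant band of width $d_0$ around $v$ on the boundary is that any occupied path ignited before $t_c$ and entering the cone $\calC^{(\alpha)}(v)$ must, before reaching this band, already be an occupied path reaching distance $\gtrsim d_0$ inside a slightly larger half-plane cone; by choosing $d_0$ large enough (using the one-arm decay estimate, or more robustly the exponential decay bound \eqref{eq:exp_decay_L1} together with a suitable comparison, or simply RSW to ensure a vacant circuit-type barrier in the region between the cone of angle $\alpha$ and the half-plane), one makes the probability of this happening small enough that, after intersecting with the arm event $\calA_1(\calC_n^{(\alpha)}(v))$ and using Harris to handle the positive correlations, the resulting probability is still $\gtrsim \pi_1^{(\alpha)}(n)$. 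The cleanest route is: the occupied arm of condition~(ii) lives on the lattice inside the cone and is an increasing event; the ``bad'' event (an early-ignited occupied path reaching the cone despite the vacant band) is contained in the existence of an occupied connection crossing an annular region near $v$ of modulus comparable to $d_0$, which can be made to have probability at most $\frac{1}{2}$ (say) by choosing $d_0$ large and exploiting that before $t_c$ the configuration is subcritical-or-critical, so such connections have probability bounded away from $1$ uniformly. Combining via Harris and van den Berg--Kesten-type separation of scales gives $c_1 \pi_1^{(\alpha)}(n)$.

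For part~(ii), the bound $\PP(v, v' \text{ both cone sites}) \leq c_2 \pi_1^{(\alpha)}(n) \pi_1^{(\alpha)}(\|v-v'\| \wedge n)$ should follow from a disjoint-occurrence argument. Write $r := \|v-v'\| \wedge n$. If $v$ and $v'$ are both $(\alpha,n)$-cone sites, then in particular there are (at $t_c$) disjoint occupied arms: one from $v$ to distance $r$ inside $\calC^{(\alpha)}_r(v)$, one from $v'$ to distance $r$ inside $\calC^{(\alpha)}_r(v')$, and one from $v$ to distance $n$ inside $\calC^{(\alpha)}_n(v)$. When $\|v-v'\| \geq n$ the cones $\calC^{(\alpha)}_n(v)$ and $\calC^{(\alpha)}_n(v')$ are disjoint (this uses $\alpha < \pi/2$... but here $\alpha$ is only assumed $\geq \pi/6$, so I need $r = \|v-v'\| \wedge n$ and to be careful: the relevant fact is that $B_r(v)$ and $B_r(v')$ intersect in a bounded region, or more simply that the arm events in $\calC^{(\alpha)}_r(v) \subseteq B_r(v)$ and in $\calC^{(\alpha)}_r(v')\subseteq B_r(v')$ use disjoint sets of vertices up to an overlap of bounded size, which can be absorbed). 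The standard way is: by the BK inequality, $\PP$ of the conjunction of the arm in $\calC^{(\alpha)}_n(v)$ and the arm in $\calC^{(\alpha)}_r(v')$ is at most $\pi_1^{(\alpha)}(n) \cdot \PP_{1/2}(\calA_1(\calC^{(\alpha)}_r(v')))$ provided these two arms can be realized on disjoint edge/vertex sets — which holds because the arm from $v'$ of length $r \le \|v-v'\|$ stays within $B_r(v')$, and one can route the arm from $v$ to distance $n$ so as to avoid a bounded neighbourhood of $v'$ at constant cost (again RSW + Harris to detour), while $\pi_1^{(\alpha)}(\|v-v'\|\wedge n) \asymp \PP_{1/2}(\calA_1(\calC^{(\alpha)}_r(v')))$ trivially. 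The factor $c_2$ absorbs the constant-cost detours and the bounded overlap.

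\textbf{Main obstacle.} The genuinely delicate point is the lower bound in part~(i): quantifying that condition~(i) (no pre-$t_c$ ignition enters the cone $\calC^{(\alpha)}(v)$) costs only a constant factor. The issue is that $\calF_{t_c}$ is built from the whole dynamical history along the entire boundary $\din V_{\HH}$, not just near $v$, so an ignition far from $v$ could in principle propagate into $\calC^{(\alpha)}(v)$; one must argue that, conditionally on the desired occupied arm of condition~(ii), the probability that some early-ignited path reaches the cone is bounded away from $1$. The cleanest fix is the vacant-band trick of the figure caption combined with a monotonicity observation: forcing the $O(d_0)$ boundary vertices near $v$ to be $2t_c$-vacant creates a local obstruction that any intruding path must circumvent, and for $d_0$ large this has small enough probability; crucially, Lemma~\ref{lem:long_path}'s style of summation over near-critical windows is \emph{not} needed here for the lower bound — only a crude RSW-based barrier estimate is — but it does show the band width $d_0$ can be taken independent of $n$. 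One must also check the band-vacancy event is compatible (positively correlated) with the increasing arm event via Harris, treating the configuration at the single time $t_c$ (for the arm) and $2t_c$ (for the band) by a standard two-time coupling / monotone-coupling argument.
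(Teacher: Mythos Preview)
Your lower bound for part~(i) has a genuine gap. You claim the ``bad'' event (an early-ignited path reaching $\calC^{(\alpha)}(v)$ despite the vacant band) is contained in an occupied crossing of an annular region \emph{near $v$} of modulus $\asymp d_0$, and that ``Lemma~\ref{lem:long_path}'s style of summation over near-critical windows is not needed.'' This is false. The cone $\calC^{(\alpha)}(v)$ is infinite, and a boundary vertex $v'$ at distance $k$ from $v$ can send an ignited path that enters the cone at height $\asymp k$, never coming near $v$ at all; no annulus around $v$ is crossed. The vacant band only kills ignitions from the finitely many vertices within distance $d_0$; for each far vertex $v'$ you still need a summable bound. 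The crude estimate $\PP(F_{k\cos\alpha}(v')) \lesssim \pi_1^{(\pi/2)}(k\cos\alpha) \asymp k^{-1/3}$ is \emph{not} summable over $k$, so an RSW barrier alone cannot close the argument. The paper's proof uses Lemma~\ref{lem:long_path} precisely here: the near-critical-window summation yields $\PP(F_{k\cos\alpha}(v')) \lesssim \zeta (k\cos\alpha)^{-13/12+\ve}$, which \emph{is} summable and can be made $\leq \tfrac{1}{2}$ by taking $d_0$ large. This is the essential step, not an optional refinement. (Your parenthetical ``vacant circuit-type barrier in the region between the cone of angle $\alpha$ and the half-plane'' does not salvage things either: at $p_c$ there is no vacant arm to infinity in a wedge, and the cone-site definition requires $\calF_{t_c}\cap\calC^{(\alpha)}(v)=\emptyset$ for the full infinite cone, not just up to distance $n$.)

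For part~(ii) your instinct is right but the BK formulation is shaky: having both cone-site events does \emph{not} give disjoint witnesses for the two arms, so BK does not apply directly, and your ``detour at constant cost'' is on the wrong side of the inequality (you need an upper bound, not a lower one). The paper instead uses spatial independence: for $d=\|v-v'\|\leq n/2$ it extracts arms in the pairwise \emph{disjoint} regions $B_{d/2}(v)$, $B_{d/2}(v')$, and $A_{2d,n}(v')$, multiplies by independence, and then applies quasi-multiplicativity \eqref{eq:one_arm_qm} and extendability \eqref{eq:one_arm_ext}. This avoids BK entirely and is the clean route.
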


\begin{proof}
We first consider (i). The second inequality in \eqref{eq:cone_est1} is clear from the definition, so we only need to prove the first one. For this purpose, consider, for some given $d_0 \geq 1$ that we explain how to choose later, the additional event that all vertices $v' \in \din V_{\HH}$ with $\|v-v'\| \leq d_0$ are $2t_c$-vacant. This has a fixed cost $(1 - p(2t_c))^{2 d_0} = e^{-2 d_0 t_c}$, and under this condition, marks inside $\calC^{(\alpha)}(v)$ can only be created by triggered vertices at a distance $\geq d_0 + 1$ from $v$.

Now, let $v' \in \din V_{\HH}$ with $k:= \|v-v'\| \geq d_0+1$. We observe that a path from $v'$ to $\calC^{(\alpha)}(v)$ implies in particular the existence of an occupied path from $v'$ to distance $d(k) = k \cos \alpha$, so Lemma~\ref{lem:long_path} implies the following. For any $\ve > 0$, the probability that one of the two neighbors of $v'$ gets triggered at some time $t \in [0,t_c]$ at which there exists an occupied path from $v'$ to the cone is at most $\zeta c d^{-\frac{13}{12} + \ve}$, for some $c(\ve)$. Choosing $\ve = \frac{1}{24}$, we deduce from the union bound that the probability that such a $v'$ exists is at most
$$2 \zeta \sum_{k=d_0}^{\infty} c (k \cos \alpha)^{- \frac{25}{24}} \leq c' (d_0)^{- \frac{1}{24}}.$$
In particular, it can be made $\leq \frac{1}{2}$ by choosing $d_0$ sufficiently large (in terms of $\alpha$ and $\zeta$ only), which we do. We finally obtain that
$$\PP(v \text{ is an $(\alpha,n)$-cone site}) \geq \frac{1}{2} e^{-2 d_0 t_c} \pi_1^{(\alpha)}(n),$$
which completes the proof of (i).

Let us now turn to (ii), and let $d = \|v-v'\|$. If $d \geq \frac{n}{2}$, the event that $v$ and $v'$ are both cone sites implies in particular the existence of occupied arms from $v$ and $v'$, both to distance $\frac{n}{4}$. We deduce that
$$\PP(v \text{ and } v' \text{ are both $(\alpha,n)$-cone sites}) \leq \Big( \pi_1^{(\alpha)}\Big( \frac{n}{4} \Big) \Big)^2,$$
and we can conclude in this case by using the extendability property \eqref{eq:one_arm_ext}. Next, we consider $d \leq \frac{n}{2}$. In this case, we make appear an arm in $\calC_{\frac{d}{2}}^{(\alpha)}(v)$ and an arm in $\calC_{\frac{d}{2}}^{(\alpha)}(v')$, and also an arm in $\calC_{2d,n}^{(\alpha)}(v')$.
We obtain
$$\PP(v \text{ and } v' \text{ are both $(\alpha,n)$-cone sites}) \leq \Big( \pi_1^{(\alpha)}\Big( \frac{d}{2} \Big) \Big)^2 \pi_1^{(\alpha)}( 2d, n ) \leq c_2 \pi_1^{(\alpha)}(n) \pi_1^{(\alpha)}(d),$$
where we used \eqref{eq:one_arm_ext} and \eqref{eq:one_arm_qm} in the second inequality. This completes the proof.
\end{proof}

Next, we use a second-moment argument, based on Lemma~\ref{lem:cone_est}, to check that there are typically plenty of cone sites.

\begin{lemma} \label{lem:cone_number}
Let $\alpha \in (\frac{\pi}{6}, \frac{\pi}{2})$, and $\zeta \in (0,\infty)$. For $n \geq 0$ and $\delta > 0$, let
$$V_n = V^{(\alpha),\delta}_n := \big| \big\{ v \in ([-n,n] \times \{0\})\cap V_{\HH} \: :\ v \text{ is an } (\alpha,\delta n)\text{-cone site} \big\} \big|.$$
Then for each $\ve > 0$, there exists $c_1(\alpha,\zeta), c_2(\alpha,\zeta,\ve), c_3(\alpha,\zeta) > 0$ such that for all $n$ large enough,
\begin{equation} \label{eq:cone_number}
\PP \big( V_n \geq c_1 n \pi_1^{(\alpha)}(\delta n) \big) \geq 1 - c_2 n \cdot (\delta n)^{-\frac{13}{12} + \ve} - c_3 \delta.
\end{equation}
\end{lemma}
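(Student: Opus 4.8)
The plan is to run a standard second-moment (Paley--Zygmund) argument for the count $V_n$ of cone sites among the roughly $2n$ boundary vertices in $[-n,n]\times\{0\}$, but with a twist: the bad contribution to the second moment comes not only from the usual pair correlations, but also from a global "clean boundary" requirement implicit in condition~(i) of Definition~\ref{def:cone_site} (namely $\calF_{t_c}\cap\calC^{(\alpha)}(v)=\emptyset$), which is not a local event and is what produces the $c_3\delta$ and $c_2 n(\delta n)^{-13/12+\ve}$ error terms. Write $m = \delta n$, $p_1 := \pi_1^{(\alpha)}(m)$, and $I := ([-n,n]\times\{0\})\cap V_\HH$, so $|I|\asymp n$.

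First I would compute the first moment. By part~(i) of Lemma~\ref{lem:cone_est}, $\PP(v\text{ is an }(\alpha,m)\text{-cone site})\geq c_1(\alpha,\zeta)\,\pi_1^{(\alpha)}(m)$ for each $v\in I$, so $\EE[V_n]\geq c_1 |I| p_1 \geq c_1' n p_1$. Second, I would bound the second moment: $\EE[V_n^2] = \sum_{v,v'\in I}\PP(v,v'\text{ both cone sites})$. Split the sum into the diagonal (contributing $\EE[V_n]\leq |I|p_1$) and off-diagonal pairs, to which Lemma~\ref{lem:cone_est}(ii) applies: the contribution is at most $c_2\sum_{v\neq v'\in I}\pi_1^{(\alpha)}(m)\,\pi_1^{(\alpha)}(\|v-v'\|\wedge m) \leq c_2 p_1 \sum_{v\in I}\sum_{k=1}^{m}(\text{number of }v'\text{ with }\|v-v'\|\approx k)\,\pi_1^{(\alpha)}(k) + (\text{pairs at distance}\geq m)$. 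Since there are $O(1)$ lattice points on the boundary line at each integer distance $k$, this is $\lesssim p_1 |I|\big(\sum_{k=1}^{m}\pi_1^{(\alpha)}(k)\big) + p_1^2|I|^2$. Here I would invoke the exponent estimate \eqref{eq:one_arm_cone_prob}: for $\alpha>\pi/6$ we have $\alpha_1^{(\alpha)}<1$, so $\pi_1^{(\alpha)}(k)=(k/\text{const})^{-\alpha_1^{(\alpha)}+o(1)}$ is summable in the sense that $\sum_{k=1}^m\pi_1^{(\alpha)}(k) \lesssim m\,\pi_1^{(\alpha)}(m) = m p_1$ (this is the cone analogue of the standard fact $\sum_{k\leq m}\pi_1(k)\asymp m^2\pi_1(m)$; concretely, $\sum_{k\le m} (k/m)^{-\alpha_1^{(\alpha)}-\ve'}\lesssim m$ since the exponent is $<1$). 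Hence $\EE[V_n^2]\lesssim p_1^2 n m + p_1^2 n^2 \lesssim p_1^2 n^2$ (using $m=\delta n\leq n$). Combining, $\EE[V_n^2]\leq C(\EE[V_n])^2/c_1'^2 \cdot$(constant), so Paley--Zygmund gives $\PP(V_n\geq \tfrac12\EE[V_n])\geq c>0$ — but this only yields a \emph{constant} probability, not the claimed $1-o(1)$.

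To upgrade to high probability I would use a conditioning trick to handle condition~(i) separately. Let $\tilde V_n$ be the analogous count where in the definition of cone site we \emph{drop} condition~(i) and keep only condition~(ii) (existence of a $t_c$-occupied arm to distance $m$ within $\calC^{(\alpha)}(v)$) together with the local "clean neighborhood" modification used in the proof of Lemma~\ref{lem:cone_est}(i) — i.e. all boundary vertices within $d_0$ of $v$ are $2t_c$-vacant. This $\tilde V_n$ is built from a genuinely \emph{local} family of events (each depending only on the percolation configuration and ignition clocks in a bounded neighborhood of $v$ up to scale $m$, plus the arm), so it concentrates. Actually the cleanest route: note $V_n \geq \tilde V_n - X_n$, where $X_n := \#\{v\in I : \text{some vertex outside the } d_0\text{-neighborhood of }v\text{ is triggered before }t_c\text{ and connects into }\calC^{(\alpha)}(v)\}$. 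By the union bound and Lemma~\ref{lem:long_path} exactly as in the proof of Lemma~\ref{lem:cone_est}(i) (connecting into the cone forces an occupied arm of length $\gtrsim k\cos\alpha$ from a vertex at distance $k$), $\EE[X_n] \leq \sum_{v\in I}\PP(\exists\text{ such }v')\leq |I|\cdot c_2\,m^{-\frac{1}{24}}$ — wait, more carefully: the dangerous triggered vertices are those whose arm actually \emph{reaches} $\calC^{(\alpha)}(v)$, which (once $d_0$ is fixed as in Lemma~\ref{lem:cone_est}) happens with total probability $\lesssim m\cdot(\text{something})$; the correct bookkeeping gives $\EE[X_n]\lesssim n\cdot (\delta n)^{-\frac{13}{12}+\ve}$, matching the second error term. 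So by Markov, $\PP(X_n \geq 1)\leq \EE[X_n] \leq c_2 n(\delta n)^{-\frac{13}{12}+\ve}$. On the complement $\{X_n=0\}$ we have $V_n\geq \tilde V_n$, and it remains to show $\PP(\tilde V_n < c_1 n p_1)\leq c_3\delta$.

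For the last piece I would apply the constant-probability second moment bound to $\tilde V_n$ on \emph{disjoint blocks}: partition $I$ into $\asymp 1/\delta$ consecutive intervals of length $\asymp m = \delta n$. The events "block $B$ contains at least one cone site (of type $\tilde V$)" restricted to arms of length $m$ and the $d_0$-local conditions are, up to a bounded number of adjacent-block interactions, independent across blocks (arms from $v$ in block $B$ stay within distance $m$ of $B$). Within each block the single-block second moment argument above gives probability $\geq c>0$ of containing $\gtrsim m p_1 = \delta n\, p_1$ cone sites; I would need the expected count per block to be $\gg 1$, i.e. $\delta n\,\pi_1^{(\alpha)}(\delta n)\to\infty$, which holds since $\pi_1^{(\alpha)}$ decays only polynomially with exponent $<1$. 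Then by a Chernoff/Hoeffding bound for the (almost-)independent indicators over $\asymp 1/\delta$ blocks, the probability that fewer than, say, half the blocks are "good" is exponentially small in $1/\delta$, hence $\leq c_3\delta$ for $\delta$ small; on that event $\tilde V_n \geq \tfrac12\cdot\tfrac{1}{\delta}\cdot c\,\delta n p_1 = c_1 n p_1$. Summing the two failure probabilities $\PP(X_n\geq1)$ and $\PP(\tilde V_n< c_1 n p_1 \mid X_n=0)$ yields \eqref{eq:cone_number}.

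I expect the main obstacle to be the block-independence step: the events defining cone sites involve the ignition-and-growth process $\calF_{t_c}$, which a priori has long-range correlations (a single trigger far away could in principle mark vertices inside many cones). The resolution — and the reason the statement only asks for error $c_3\delta$ rather than $e^{-cn}$ — is precisely to peel off the long-range part as the rare event $\{X_n\geq1\}$ via Lemma~\ref{lem:long_path}, leaving only the local, truly block-decoupled count $\tilde V_n$; making the "up to bounded adjacent-block interactions" precise (e.g. by only using every other block, or by a standard BK/independence-on-disjoint-supports argument since arms of length $m$ from block $B$ and block $B'$ use disjoint vertex sets when $B,B'$ are non-adjacent) is routine but must be stated carefully. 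The summability input $\sum_{k\leq m}\pi_1^{(\alpha)}(k)\lesssim m\pi_1^{(\alpha)}(m)$, which crucially uses $\alpha>\pi/6 \Rightarrow \alpha_1^{(\alpha)}<1$, is the other place where the hypothesis $\alpha\in(\pi/6,\pi/2)$ is essential (and mirrors Lemma~\ref{lem:apriori_sum}).
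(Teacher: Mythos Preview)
Your overall architecture --- localize the cone-site event, bound the probability that the localized and true counts differ, then run a second-moment concentration argument on the localized count --- matches the paper's. The summability input $\sum_{k\le m}\pi_1^{(\alpha)}(k)\lesssim m\,\pi_1^{(\alpha)}(m)$ and the role of the hypothesis $\alpha>\pi/6$ are also identified correctly.

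However, there is a genuine gap in your localization step. Your $\tilde V_n$ replaces condition~(i) by the \emph{constant}-scale requirement that the boundary vertices within $d_0$ of $v$ are $2t_c$-vacant, and you then define $X_n$ as the number of $v\in I$ for which some trigger at distance $>d_0$ reaches $\calC^{(\alpha)}(v)$. The bound you assert, $\PP(X_n\ge 1)\le \EE[X_n]\le c_2\,n(\delta n)^{-13/12+\ve}$, is not correct: for each fixed $v$, the probability that \emph{some} trigger beyond $d_0$ reaches the cone is of constant order (it is the tail sum $\sum_{k>d_0}\zeta\,c\,(k\cos\alpha)^{-13/12+\ve}\asymp d_0^{-1/12+\ve}$, as in the proof of Lemma~\ref{lem:cone_est}(i)), so $\EE[X_n]\asymp n$ and in fact $\PP(X_n\ge 1)\to 1$. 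The ``correct bookkeeping'' you allude to does not exist for this $X_n$: a trigger at distance $d_0+1$ from $v$ can reach $\calC^{(\alpha)}(v)$ with an arm of length $O(d_0)$, and there is no way to get the factor $(\delta n)^{-13/12+\ve}$ out of that.

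The paper fixes this by localizing condition~(i) at scale $\delta n$ rather than at scale $d_0$: the localized event $\tilde I_n(v)$ keeps a version (i)$'$ requiring that no trigger \emph{inside a box $R_n(v)$ of width $\asymp\delta n$} produces a marked path (staying in $R_n(v)$) entering the cone. With this definition $I_n(v)\subseteq \tilde I_n(v)$, and the key point is that $\tilde I_n(v)\setminus I_n(v)$ forces an ignited path of radius $\gtrsim\delta n$ somewhere (either the trigger is far, so the arm to the cone is long, or the trigger is near but the path must exit $R_n(v)$ to avoid being ``local''). One then union-bounds over the $O(n)$ possible \emph{trigger} locations, not over the $v$'s, and Lemma~\ref{lem:long_path} gives $\PP(V_n\neq\tilde V_n)\le c_2\,n(\delta n)^{-13/12+\ve}$. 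This is the missing idea.

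A secondary remark: once you localize at scale $\delta n$, the events $\tilde I_n(v)$ and $\tilde I_n(v')$ are genuinely independent whenever $\|v-v'\|\gtrsim\delta n$, so the variance computation gives $\Var(\tilde V_n)\lesssim \delta\,(n\,\pi_1^{(\alpha)}(\delta n))^2$ directly, and a single application of Chebyshev yields the $c_3\delta$ error. Your block-plus-Chernoff route would also work (and even gives $e^{-c/\delta}$ rather than $c_3\delta$), but it is unnecessary once the localization is done at the right scale.
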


\begin{proof}
On the one hand, \eqref{eq:cone_est1} implies directly that for some $c = c(\alpha,\zeta) > 0$,
\begin{equation} \label{eq:lower_bd_exp_V}
\EE \big[ V_n \big] \geq c n \pi_1^{(\alpha)}(\delta n).
\end{equation}
In order to use a second-moment reasoning, we replace each event $I_n(v) := \{v$ is an $(\alpha,\delta n)$-cone site$\}$ by a ``localized'' version $\tilde{I}_n(v)$ (depending on $\alpha$ and $\delta$), obtained by considering only paths and ignitions within the box
$$R_n(v) = R_n^{(\alpha),\delta}(v) := \Big( v + [- (\tan \alpha) \delta n, (\tan \alpha) \delta n ] \times \Big[- \frac{\sqrt{3}}{2}, \delta n \Big] \Big) \cap V_{\HH}.$$
More precisely, in the definition that $v$ is an $(\alpha,\delta n)$-cone site, we replace the first condition (that is, (i) in Definition~\ref{def:cone_site}) by
$$\text{(i)'} \:\: \tilde{\calF}_{t_c}(v) \cap \calC^{(\alpha)}(v) = \emptyset,$$
where $\tilde{\calF}_{t_c}(v)$ is the set of vertices that can be reached by a local path before time $t_c$: i.e., a path which is marked by the triggering of a vertex $v'$ with $\|v-v'\| \leq (\tan \alpha) \delta n$, and which stays completely inside $R_n(v)$ (see Figure~\ref{fig:local_event}).

\begin{figure}[t]
\begin{center}

\includegraphics[width=.48\textwidth]{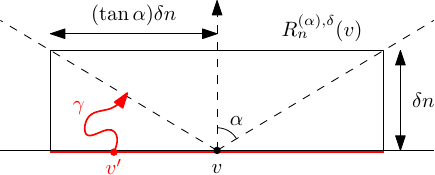}
\caption{\label{fig:local_event} For the localized event $\tilde{I}_n(v)$, we only take into account ignitions from the boundary arc $(v + [- (\tan \alpha) \delta n, (\tan \alpha) \delta n] \times \{ - \frac{\sqrt{3}}{2} \} ) \cap V_{\HH}$ (colored in red), and ignited paths $\gamma$ remaining in $R_n^{(\alpha),\delta}(v)$.}

\end{center}
\end{figure}

We denote by $\tilde{V}_n$ the corresponding number of vertices $v$. Clearly, $I_n(v) \subseteq \tilde{I}_n(v)$, so $\tilde{V}_n \geq V_n$. On the other hand, we observe that the event $\tilde{I}_n(v) \setminus I_n(v)$ implies the existence of an ignited path (marked by the triggering of a vertex) with radius $\geq \kappa \delta n$, for some suitable $\kappa(\alpha) > 0$. Indeed, assume that $\tilde{I}_n(v)$ occurs, but not $I_n(v)$: this means that a path $\gamma$, ignited by some vertex $v'$, reaches the cone $\calC^{(\alpha)}(v)$ (before time $t_c$), but there exists no such local path. We can then distinguish two cases. If $\|v-v'\| \geq (\tan \alpha) \delta n/2$, then $\gamma$ has radius at least $(\sin \alpha) \delta n /2$. On the other hand, if $\|v-v'\| \leq (\tan \alpha) \delta n/2$, then $\gamma$ has to exit the box $R_n(v)$ (otherwise it would be a local path), so it connects $v'$ to a distance at least $(\tan \alpha) \delta n/2$. This establishes the claim.

This leads us to consider the event
$$\calJ_n := \Big\{\exists v' \in \Big([-2n,2n] \times \Big\{- \frac{\sqrt{3}}{2}\Big\}\Big)\cap V_{\HH} \text{ triggering an ignited path with radius $\geq \kappa \delta n$} \Big\},$$
and we let $\calJ'_n$ be the event that some ignition from outside $[-2n,2n] \times \{- \frac{\sqrt{3}}{2} \}$ reaches one of the cones $\calC^{(\alpha)}(v)$, $v \in ([-n,n] \times \{0\})\cap V_{\HH}$. Hence, $V_n = \tilde{V}_n$ on the event $(\calJ_n \cup \calJ'_n)^c$. Using Lemma~\ref{lem:long_path}, we can easily obtain that for some $c_2(\alpha,\zeta,\ve)$,
\begin{equation} \label{eq:upper_bd_Jn}
\PP(V_n \neq \tilde{V}_n) \leq \PP(\calJ_n \cup \calJ'_n) \leq c_2 n \cdot (\delta n)^{-\frac{13}{12} + \ve}.
\end{equation}
Now, there remains to estimate $\text{Var}(\tilde{V}_n)$. Noticing that for any two vertices $v, v' \in ([-n,n] \times \{- \frac{\sqrt{3}}{2}\})\cap V_{\HH}$, the events $\tilde{I}_n(v)$ and  $\tilde{I}_n(v')$ are independent as soon as $\|v - v'\| \geq 2 (\tan \alpha) \delta n$, we obtain
\begin{align*}
\text{Var}(\tilde{V}_n) & \leq \sum_{v \in [-n,n] \times \{0\}} \sum_{v' : \|v-v'\| \leq 2 (\tan \alpha) \delta n} \PP \big( \tilde{I}_n(v) \cap \tilde{I}_n(v') \big)\\
& \leq \sum_{v \in [-n,n] \times \{0\}} 2 \sum_{d=0}^{2 (\tan \alpha) \delta n} c^{(1)} \pi_1^{(\alpha)}(\delta n) \cdot \pi_1^{(\alpha)}(d \wedge \delta n)\\
& = \sum_{v \in [-n,n] \times \{0\}} 2 \bigg( \sum_{d=0}^{\delta n} c^{(1)} \pi_1^{(\alpha)}(\delta n) \cdot \pi_1^{(\alpha)}(d) + \sum_{d=\delta n+1}^{2 (\tan \alpha) \delta n} c^{(1)} \pi_1^{(\alpha)}(\delta n) \cdot \pi_1^{(\alpha)}(\delta n) \bigg)\\
& \leq \sum_{v \in [-n,n] \times \{0\}} 2 \bigg( \sum_{d=0}^{\delta n} c^{(2)} \pi_1^{(\alpha)}(\delta n)^2 \cdot \big( \pi_1^{(\alpha)}(d,\delta n) \big)^{-1} + c^{(3)} \delta n \pi_1^{(\alpha)}(\delta n)^2 \bigg).
\end{align*}
where we used \eqref{eq:cone_est2} for the second inequality, and the quasi-multiplicativity property \eqref{eq:one_arm_qm} on the last line. In this series of inequalities, each of the constants $c^{(i)}$ only depends on $\alpha$. From Lemma~\ref{lem:apriori_sum}, we have
$$\sum_{d=0}^{\delta n} \big( \pi_1^{(\alpha)}(d,\delta n) \big)^{-1} \leq c^{(4)} \delta n$$
(here we use the hypothesis $\alpha > \frac{\pi}{6}$), so
$$\text{Var}(\tilde{V}_n) \leq 2n \pi_1^{(\alpha)}(\delta n)^2 \cdot (c^{(5)} \delta n) = c^{(6)} \delta (n \pi_1^{(\alpha)}(\delta n))^2.$$
It thus follows from Chebyshev's inequality, combined with \eqref{eq:lower_bd_exp_V} and $\tilde{V}_n \geq V_n$, that
\begin{equation}
\PP \Big( \tilde{V}_n \geq \frac{c}{2} n \pi_1^{(\alpha)}(\delta n) \Big) \geq 1 - \frac{4 \text{Var}(\tilde{V}_n)}{(c n \pi_1^{(\alpha)}(\delta n))^2} \geq 1 - c_3 \delta,
\end{equation}
for some $c_3(\alpha,\zeta)$. If we let $c_1 = \frac{c}{2}$, we obtain, thanks also to \eqref{eq:upper_bd_Jn},
$$\PP \big( V_n \geq c_1 n \pi_1^{(\alpha)}(\delta n) \big) \geq 1 - c_2 n \cdot (\delta n)^{-\frac{13}{12} + \ve} - c_3 \delta,$$
which completes the proof.
\end{proof}

\section{Forest fire with boundary ignitions} \label{sec:ff_hexagon}

We now investigate the behavior in finite domains of the forest fire model with boundary ignitions. For convenience, we focus on hexagonal domains fitting on the triangular lattice. First, we set notations in Section~\ref{sec:ff_notation}, defining precisely the processes under consideration. We then prove our main results, Theorems~\ref{thm:main_result1} and \ref{thm:main_result2}, for the processes without and with recoveries, respectively, in Sections~\ref{sec:proof} and \ref{sec:proof2}.

\subsection{Setting and notations} \label{sec:ff_notation}

We now define precisely the forest fire model that we study, and set notations. Our process is defined on vertices in the hexagon $H_N$ centered on $0$ and with side length $N$, which is depicted on Figure~\ref{fig:hexagon}. Formally, $H_N$ is the set of vertices at a graph distance at most $N$ from $0$, i.e. which can be reached from $0$ through a path of length at most $N$. Vertices along $\dout H_N$ (which consists of the vertices at distance exactly $N+1$ from $0$) get ignited at some given rate $\zeta \in (0,\infty]$, and trigger ignitions within the hexagon: when such a vertex gets triggered, all the occupied vertices connected to it burn immediately (similarly to the half-plane setting in Section~\ref{sec:cone_sites}). We denote by $\PP_N$ the probability measure governing this process. In our proofs, we mostly use the the lower side of $\din H_N$, and the ignitions produced by the vertices on the row just below, i.e. with $y$-coordinate $- (N+1) \frac{\sqrt{3}}{2}$. For this purpose, we will naturally shift the definition of cone sites vertically, by $- N \frac{\sqrt{3}}{2}$.

\begin{figure}[t]
\begin{center}

\includegraphics[width=.85\textwidth]{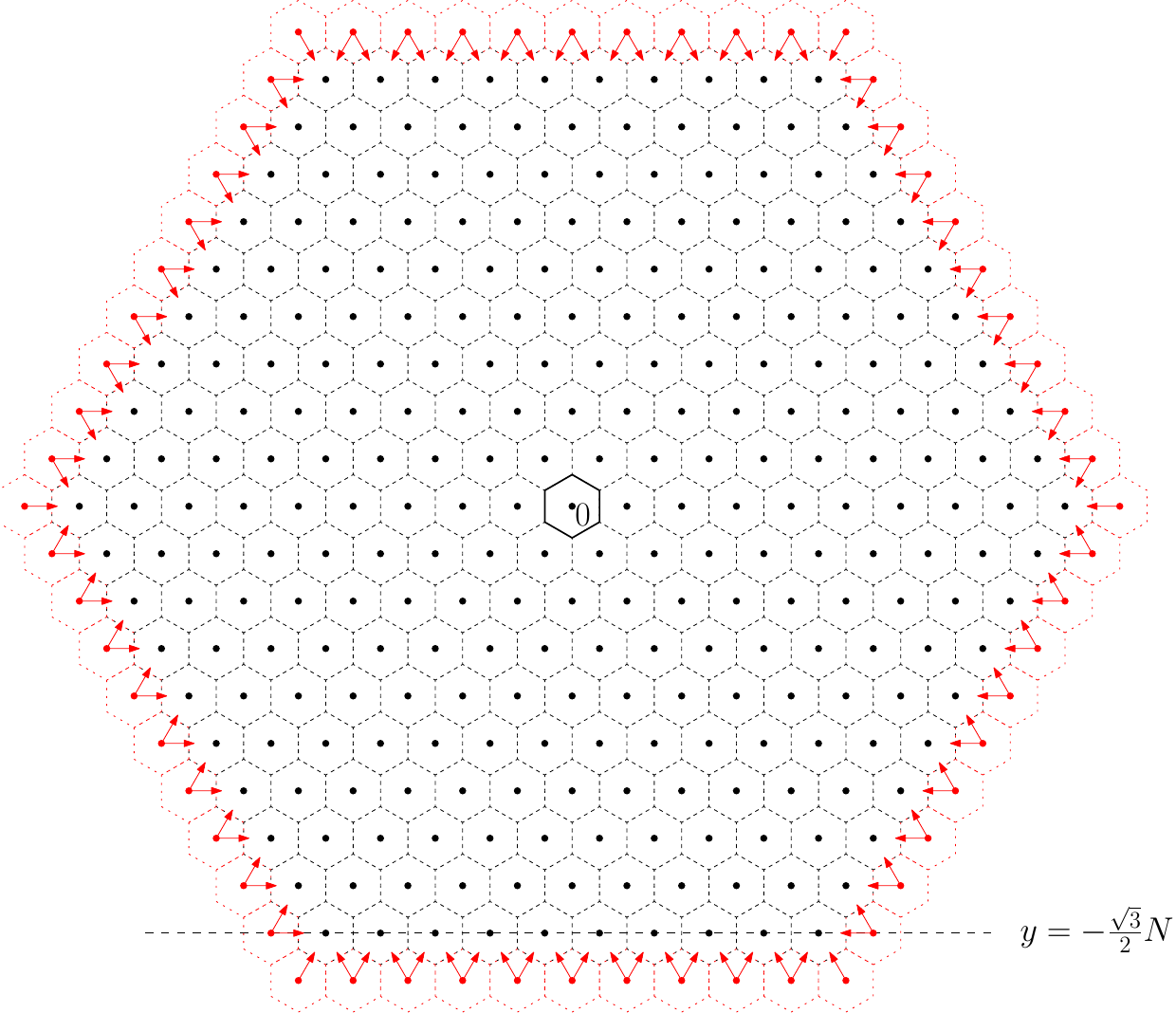}
\caption{\label{fig:hexagon} The hexagon $H_N$ consists of all the vertices lying within a graph distance $N$ from $0$ (colored in black). The ignitions are then coming from the vertices along its outer boundary $\dout H_N$, shown in red.}

\end{center}
\end{figure}

Once again, for each $t \in [0, \infty)$, we denote by $\calF_t$ the set of vertices which are carrying a mark at time $t$ (i.e. which were reached by a marked path before time $t$, produced by the triggering of some vertex during $[0,t]$). The following observation will turn out to be handy.

\begin{lemma} \label{lem:depth_fires}
Let $\zeta \in (0,\infty)$. For all $\delta \in (0, \frac{1}{13})$, we have:
\begin{equation} \label{eq:depth_fires}
\text{for all } \beta > \frac{3}{4} (1- \delta), \quad \PP \big( \calF_{t_c + N^{- \beta}} \cap H_{N-N^{1-\delta}} \neq \emptyset \big) \stackrel{N \to \infty}{\longrightarrow} 0.
\end{equation}
\end{lemma}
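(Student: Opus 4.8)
The plan is to show that no ignition can penetrate from the boundary $\dout H_N$ to the inner region $H_{N-N^{1-\delta}}$ by time $t_c + N^{-\beta}$, by controlling how deep an ignited (occupied) path can reach. A path that is ignited by a boundary vertex and meets $H_{N-N^{1-\delta}}$ must, in particular, contain an occupied crossing of some annulus (or half-plane cone section) of inner radius of order $1$ and outer radius at least $N^{1-\delta}$, at the time of the relevant ignition. Since $\beta > \tfrac{3}{4}(1-\delta)$, at any time $t \leq t_c + N^{-\beta}$ we have, by the near-critical scale relation between $|p-p_c|$ and $L$ in \eqref{eq:theta_L_exp}, that $L(p(t)) \ll N^{1-\delta}$ up to a power loss smaller than $\delta$: more precisely, for $t = t_c + s$ with $s \le N^{-\beta}$, $L(p(t)) \le L(p_c+N^{-\beta}) = N^{\frac{4}{3}\beta + o(1)}$, and $\tfrac{4}{3}\beta > 1-\delta$ exactly when $\beta > \tfrac{3}{4}(1-\delta)$. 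Hence the scale $N^{1-\delta}$ is supercritical by a polynomial factor, and occupied crossings to that distance have probability that is stretched-exponentially small in $N^{1-\delta}/L \gg N^{c}$ for some $c(\delta,\beta)>0$, by \eqref{eq:exp_decay_L1}.

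The execution would go as follows. First, fix a cone angle $\alpha \in (\tfrac{\pi}{6},\tfrac{\pi}{2})$ (vertically shifted to match the bottom boundary of $H_N$, as in Section~\ref{sec:ff_notation}), or simply argue directly with annuli; the cone is convenient since an ignited path from a boundary vertex $v'$ reaching depth $N^{1-\delta}$ into $H_N$ forces an occupied connection from a neighbor of $v'$ to distance of order $N^{1-\delta}$ inside a half-plane cone. Second, adapt the summation argument of Lemma~\ref{lem:long_path}: sum over boundary vertices $v'$ (there are $O(N)$ of them) and over the Poisson ignition times in $[0,t_c+N^{-\beta}]$, splitting the time interval into the near-critical window for scale $N^{1-\delta}$ and its complement. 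For ignition times at which $L(p(t)) \lesssim N^{1-\delta}$ — and this now includes all times up to $t_c+N^{-\beta}$ because of the inequality $\beta>\tfrac34(1-\delta)$ — the occupied-arm-to-distance-$N^{1-\delta}$ probability is bounded via \eqref{eq:one_arm_cone_prob} combined with \eqref{eq:near_critical_arm}, giving something like $(N^{1-\delta})^{-1/3+o(1)}$, while the window has length at most $N^{-\beta}$ up to polynomial corrections; the product, multiplied by $O(N)$ boundary vertices and the Poisson rate $\zeta t_c$, should be $N^{1-\beta - \frac{1}{3}(1-\delta) + o(1)}$, which tends to $0$ precisely because $\beta > \tfrac34(1-\delta)$ forces $1 - \beta - \tfrac13(1-\delta) < 1 - \tfrac34(1-\delta) - \tfrac13(1-\delta) = 1 - \tfrac{13}{12}(1-\delta) < 0$ (here using $\delta < \tfrac{1}{13}$). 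For ignition times far below $t_c$, where $L \ll N^{1-\delta}$ already at $t_c - \text{const}$, \eqref{eq:exp_decay_L1} gives a stretched-exponential bound that easily absorbs the $O(N)$ factor. A union bound over these contributions yields \eqref{eq:depth_fires}.

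The main obstacle — really the only delicate point — is bookkeeping the near-critical window correctly: one must be careful that the relevant ``dangerous'' times are those $t$ with $L(p(t)) \gtrsim N^{1-\delta}$, i.e. $t \in [t_c - O(N^{-\frac34(1-\delta)+o(1)}),\, t_c + N^{-\beta}]$, and that on this whole interval the arm-to-distance-$N^{1-\delta}$ probability stays at its critical order by \eqref{eq:near_critical_arm} (valid since $N^{1-\delta} \le K L(p(t))$ there for suitable $K$). The constraint $\delta < \tfrac1{13}$ is exactly what makes the exponent $1 - \tfrac{13}{12}(1-\delta)$ negative, i.e. it is the same $\tfrac{13}{12}$ bookkeeping as in Lemma~\ref{lem:long_path}, now with $n = N^{1-\delta}$ and an extra factor $N$ from the length of the boundary. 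Everything else is a routine geometric reduction (ignited path reaching depth $N^{1-\delta}$ implies an occupied arm of that length emanating near the boundary) plus the already-established estimates \eqref{eq:exp_decay_L1}, \eqref{eq:one_arm_cone_prob}, \eqref{eq:near_critical_arm}, and the $L$-versus-$(p-p_c)$ exponent in \eqref{eq:theta_L_exp}.
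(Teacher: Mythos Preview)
Your overall strategy coincides with the paper's: apply the summation of Lemma~\ref{lem:long_path} with $n=N^{1-\delta}$, extend the time horizon slightly past $t_c$, and union-bound over the $O(N)$ boundary vertices. Your final paragraph also correctly identifies the dangerous window as $[t_c-N^{-\frac{3}{4}(1-\delta)+o(1)},\,t_c+N^{-\beta}]$ and the role of $\delta<\tfrac{1}{13}$ in making $1-\tfrac{13}{12}(1-\delta)<0$.

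However, your first two paragraphs have the key inequality reversed, and this is not cosmetic. From $\beta>\tfrac{3}{4}(1-\delta)$ you get $\tfrac{4}{3}\beta>1-\delta$, hence $L(p(t_c+N^{-\beta}))=N^{\frac{4}{3}\beta+o(1)}\gg N^{1-\delta}$, not $\ll$. Thus the scale $N^{1-\delta}$ sits \emph{inside} the near-critical window at every time in $[0,t_c+N^{-\beta}]$ that matters; the arm-to-depth-$N^{1-\delta}$ probability there is the \emph{polynomial} quantity $(N^{1-\delta})^{-1/3+o(1)}$ from \eqref{eq:near_critical_arm}--\eqref{eq:one_arm_cone_prob}, not something stretched-exponentially small via \eqref{eq:exp_decay_L1}. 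Relatedly, the length of the dangerous window is $(N^{1-\delta})^{-3/4+o(1)}$, which is \emph{larger} than $N^{-\beta}$; your intermediate bound $N^{1-\beta-\frac{1}{3}(1-\delta)}$ therefore uses too short a window and is not justified (the inequality you then apply, $N^{1-\beta-\frac{1}{3}(1-\delta)}<N^{1-\frac{13}{12}(1-\delta)}$, goes in the wrong direction for an upper bound). The correct product is $N\cdot(N^{1-\delta})^{-3/4+\ve}\cdot(N^{1-\delta})^{-1/3+\ve}=N^{1-\frac{13}{12}(1-\delta)+2\ve(1-\delta)}$, which is exactly the paper's \eqref{eq:depth_union_bd}. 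Once you fix these directions, your plan and the paper's proof are the same.
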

In the following, we use this lemma through the fact that \eqref{eq:depth_fires} holds with some $\delta > 0$ and some $\beta$ slightly smaller than $\frac{3}{4}$, so that the time $t_c + N^{-\beta}$ is ``much later'' than $t_c + N^{-3/4}$. We can for example set $\delta_0 = \frac{1}{14}$, and $\beta_0 = \frac{7}{10}$. We will see that the requirement $\delta < \frac{1}{13}$ appears naturally in the proof.

\begin{proof}
Let $\delta > 0$. A similar summation as that in the proof of Lemma~\ref{lem:long_path} gives the following analog of \eqref{eq:Fn_upper_bd}. For each $\ve > 0$, the probability, for a marked path originating from $v \in \dout H_N$, to reach distance $N^{1 - \delta}$ before time $t_c + N^{-\beta}$ is at most $(N^{1 - \delta})^{-\frac{1}{3} + \ve} (N^{1 - \delta})^{-\frac{3}{4} + \ve}$, for all $N$ large enough. Here we also use the condition $\beta > \frac{3}{4} (1- \delta)$, which ensures that $L(t_c + N^{- \beta}) = N^{\frac{4}{3} \beta + o(1)} \gg N^{1-\delta}$. From the union bound, we conclude that
\begin{equation} \label{eq:depth_union_bd}
\PP \big( \calF_{t_c + N^{- \beta}} \cap H_{N-N^{1-\delta}} \neq \emptyset \big) \leq c N \cdot (N^{1-\delta})^{-\frac{13}{12} + 2 \ve}.
\end{equation}
Now, if we assume that $\delta < \frac{1}{13}$, and we then choose $\ve > 0$ small enough, the right-hand side of \eqref{eq:depth_union_bd} tends to $0$, which completes the proof.
\end{proof}

%

\subsection{Proof of Theorem~\ref{thm:main_result1}} \label{sec:proof}

We are now in a position to prove our main result for the forest fire process in $H_N$.

\begin{proof}[Proof of Theorem~\ref{thm:main_result1}]
We consider first $\zeta \in (0,\infty)$. As we explain toward the end of the proof, the case $\zeta = \infty$ can be handled in the same way, with only a small adaptation in the definition of cone sites being required.

For any given $\bar{\ve} > 0$, let us prove that for all $N$ large enough, $\PP_N(0 \text{ gets ignited}) \leq \bar{\ve}$. Consider some arbitrary $\delta \in (0, \frac{1}{13})$ and $\beta \in (\frac{3}{4} (1- \delta), \frac{3}{4})$. Let $\ul t := t_c + N^{- \beta}$. We know from Lemma~\ref{lem:depth_fires} that
\begin{equation} \label{eq:proof_main1}
\text{for all $N \geq N_1$,} \quad \PP \big( \calF_{\ul t} \cap H_{N-N^{1-\delta}} = \emptyset \big) \geq 1 - \frac{\bar{\ve}}{5}.
\end{equation}
Let $\beta' \in (\beta, \frac{3}{4})$. Using the critical exponent for $L$ (see \eqref{eq:theta_L_exp}), we have $L(\ul t) = N^{\frac{4}{3} \beta + o(1)} \ll N^{\frac{4}{3} \beta'}$ as $N \to \infty$, so we deduce from \eqref{eq:exp_decay_L2} that (in the pure birth process):
\begin{equation} \label{eq:proof_main2}
\text{for all $N \geq N_2$,} \quad \PP \big( \text{at time $\ul t$, } \calO(H_{N-N^{1-\delta}} \setminus H_{N - N^{4 \beta'/3}}) \text{ occurs} \big) \geq 1- \frac{\bar{\ve}}{5}.
\end{equation}
From now on, we assume that the two events appearing in \eqref{eq:proof_main1} and \eqref{eq:proof_main2} occur, and we denote by $\calO$ any occupied circuit as in \eqref{eq:proof_main2}.

Now, let $\alpha = \frac{\pi}{2} / (1 + \delta)$, so that $\alpha_1^{(\alpha)} = \frac{1}{3} (1+\delta)$ (see \eqref{eq:one_arm_cone_exp}). We will consider cone sites (as always, at time $t_c$) along the lower side $\dout_B H_N$ of $H_N$, to distance $\eta N$, for some well-chosen $\eta>0$. For this, we use a small adaptation of Lemma~\ref{lem:cone_number} (with e.g. the particular value $\ve = \frac{1}{24}$). To be safe, we lower bound the number of cone sites by restricting to vertices along the middle ``third'' of $\dout_B H_N$, that we denote by $\dout_{[B]} H_N$. Even though we are not exactly in the upper half-plane any more, it is easy to see that the same conclusions hold for the number of cone vertices, by truncating some of the summations in the proofs if necessary. Hence, we get that for some $\eta > 0$ small enough:
\begin{equation} \label{eq:proof_main3}
\text{for all $N \geq N_3$,} \quad \PP \big( \text{there exist at least $N^{\frac{2}{3} - \delta}$ $(\alpha,\eta N)$-cone sites along $\dout_{[B]} H_N$} \big) \geq 1- \frac{\bar{\ve}}{5}.
\end{equation}

We now assume that the event in \eqref{eq:proof_main3} holds, in addition to those in \eqref{eq:proof_main1} and \eqref{eq:proof_main2}, and we investigate what happens after time $\ul t$, conditionally on these events. We make the following observations.
\begin{enumerate}[(1)]
\item At time $\ul t$, none of these cone sites has been triggered yet. Indeed, this would otherwise allow $\calF_{t_c + N^{- \beta}}$ to enter into $H_{N-N^{1-\delta}}$, contradicting the event in \eqref{eq:proof_main1}. Moreover, at this time $\ul t$, all the cone sites are connected to the circuit $\calO$.

\item Let $\ol t := t_c + N^{-\frac{2}{3} + 2 \delta} > \ul t$, we have
\begin{equation} \label{eq:proof_main4}
\text{for all $N \geq N_4$,} \quad \PP \big( \text{one of the cone sites gets triggered before time $\ol t$} \big) \geq 1- \frac{\bar{\ve}}{5}.
\end{equation}
When this happens, this causes the circuit $\calO$ to burn, thanks to the previous observation.

\item Finally, $L(\ol t) = N^{\frac{8}{9} - \frac{8}{3} \delta + o(1)}$ as $N \to \infty$, so
\begin{equation} \label{eq:proof_main5}
\text{for all $N \geq N_5$,} \quad \PP \big( \text{at time $\ol t$, $\calO^*(0)$ occurs} \big) \geq 1- \pi_1(N^{\frac{8}{9} - 3 \delta}) \geq 1- \frac{\bar{\ve}}{5},
\end{equation}
where we denote by $\calO^*(0)$ the existence, in the pure birth process, of a vacant circuit which surrounds $0$, and furthermore separates $0$ from $\dout H_{N/2}$.
\end{enumerate}
We can thus conclude, by combining \eqref{eq:proof_main1}, \eqref{eq:proof_main2}, \eqref{eq:proof_main3}, \eqref{eq:proof_main4} and \eqref{eq:proof_main5}, that
$$\text{for all $N \geq N_0 := \max_{1 \leq i \leq 5} N_i$,} \quad \PP_N(\text{$0$ does not get ignited}) \geq 1- \bar{\ve},$$
as desired. This establishes \eqref{eq:main}.

In order to obtain \eqref{eq:main_quant}, i.e. explicit lower and upper bounds on the probability that $0$ gets ignited, it suffices to be slightly more careful in the successive steps of the above proof. Instead of considering cone sites to distance $\eta N$, for some fixed $\eta > 0$, we can consider $(\frac{\pi}{2} / (1 + \delta), N^{\frac{12}{13} + \delta})$-cone sites. Then with high probability, there are at least $N^{\frac{9}{13} - \delta}$ of them (along $\dout_{[B]} H_N$). By considering the corresponding time $\ol t := t_c + N^{-\frac{9}{13} + 2 \delta}$, at which $L(\ol t) \ll N^{\frac{12}{13} + \delta}$ (using \eqref{eq:theta_L_exp}), this provides the lower bound $(1 - \bar{\ve}) \pi_1(L(\ol t))$, which is $\geq N^{-\frac{5}{52} - \delta}$ for all $N$ large enough (from the one-arm exponent $\alpha_1$ mentioned below \eqref{eq:L_equiv}).


Finally, we discuss briefly the case $\zeta = \infty$ of an infinite rate of ignition, i.e. clusters burning immediately when they touch the boundary. In this case, we need to change a little bit the definition of cone sites (note that they could not even exist otherwise): at time $t_c$, we replace the event $\calA_1(\calC_n^{(\alpha)}(v))$ by the event $\{v$ is vacant, and there exists an occupied path from one of the two neighbors of $v$ above it to distance $n\}$ (and remaining within $\calC^{(\alpha)}(v)$). Then it is easy to see that Lemmas~\ref{lem:cone_est} and \ref{lem:cone_number} hold with this modified notion of cone sites, as well as Lemma~\ref{lem:depth_fires}. This completes the proof.
\end{proof}

\subsection{Role of recoveries: proof of Theorem~\ref{thm:main_result2}} \label{sec:proof2}

We now explain how tools developed in \cite{BN2022} can be used to obtain the analog of Theorem~\ref{thm:main_result1} for the process with recoveries, i.e. Theorem~\ref{thm:main_result2}. In that recent work, which was strongly inspired by the earlier paper \cite{KMS2015} by Kiss, Manolescu and Sidoravicius, and generalizes substantially the results in that paper, we study the geometric impact of recoveries in forest fires. For our purpose, one specific result derived in \cite{BN2022} turns out to be sufficient.

\begin{proof}[Proof of Theorem~\ref{thm:main_result2}]
Most of the proof of Theorem~\ref{thm:main_result1} can be repeated, and we only mention the extra input that is needed (using the same notations as in that proof). We know from \eqref{eq:proof_main2} that at the time $\ul t > t_c$, which satisfies $L(\ul t)  \ll N^{\frac{4}{3} \beta'}$ as $N \to \infty$, there exists an occupied circuit in $H_{N-N^{1-\delta}} \setminus H_{N - N^{4 \beta'/3}}$ with high probability (w.h.p.). It is easy to see that, in addition, there is a $\ul t$-occupied circuit in $A_{N/4,N/2}$, that we denote by $\calO'$, as well as a $\ul t$-occupied path from the former circuit $\calO$ to $\din B_{N/4}$. So in particular, these two mentioned circuits $\calO$ and $\calO'$ are connected, so that the ignition which triggers the burning of $\calO$ (see the sentence below \eqref{eq:proof_main4}) will also cause $\calO'$ to burn, ``isolating'' $0$ in an island contained in $B_{N/2}$.

Now, for the process with recoveries, we can apply Theorem~5.7 of \cite{BN2022} to the burning of $\calO'$ (together with some obvious monotonicity, since $\ul t > t_c$). This allows us to conclude that w.h.p., that burning will keep $0$ isolated until, at least, some (universal) time $\hat{t} > t_c$ (which is the time $t_c + \mathfrak{d}$ provided by \cite{BN2022}). This completes the proof.
\end{proof}


\section{Consequences for Graf's forest fire process} \label{sec:connection_Graf}

Finally, we give a proof for Theorem~\ref{thm:main_resultH}. Recall that this result says, roughly speaking, that no infinite occupied clusters (and hence, ``no infinite fires'') emerge in the forest fire process in the upper half-plane, with ignitions along the real line. As mentioned earlier, we focus on the process without recoveries in the present paper. We believe that the same result holds true for the original process, with recoveries, introduced by Graf, again up to some universal time $\hat{t} > t_c$. However, proving it seems to require some non-trivial adaptation of the results from \cite{KMS2015} and \cite{BN2022} (contrary to the proof of Theorem~\ref{thm:main_result2} above), so we decided not to include it in the present paper.

Strictly speaking, our proof below gives Theorem~\ref{thm:main_resultH} \emph{under the assumption that the process exists}. However, careful inspection of the proof shows that it also implies the analog of Theorem~\ref{thm:main_resultH} for the case of Graf's process without recoveries (which has the \emph{extra rule} that infinite clusters burn immediately, and for which existence follows from the same arguments as in \cite{Gr2014}). But then it automatically follows that our process also exists (and satisfies Theorem~\ref{thm:main_resultH}).

\begin{proof}[Proof of Theorem~\ref{thm:main_resultH}]
Choose any vertex $v \in V_{\HH}$, and let $k := \|v\|+1$, so that $v \in B_k$. Consider an arbitrary $\bar{\ve} > 0$. We follow the construction depicted on Figure~\ref{fig:half_plane}.

\begin{figure}[t]
\begin{center}

\includegraphics[width=.75\textwidth]{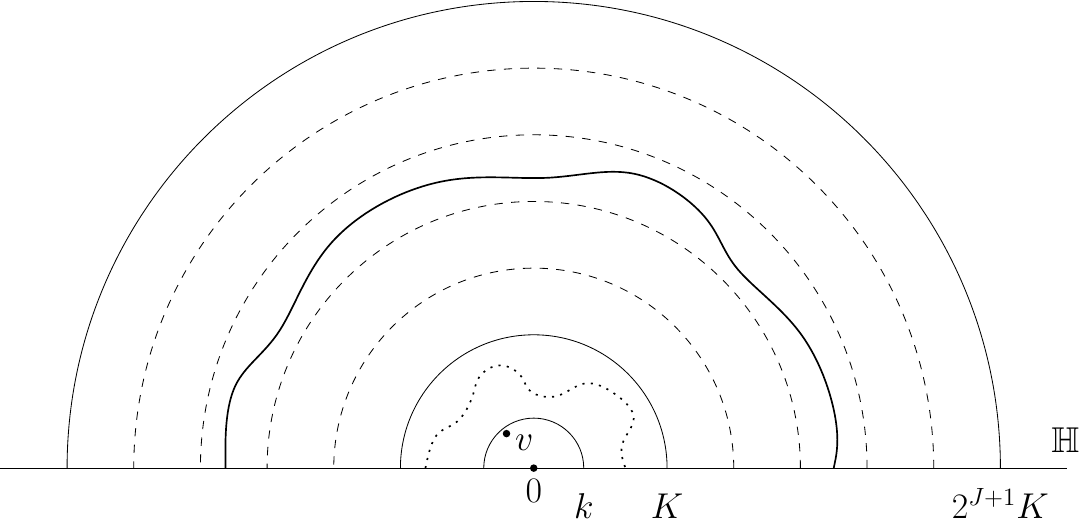}
\caption{\label{fig:half_plane} Let $k$ be such that $v \in B_k$, we let $K \geq k$ large enough so that (w.h.p.) there exists a $t_c$-vacant semi-circuit (in dotted line) in $A_{k,K} \cap V_{\HH}$, and then $J$ sufficiently large so that (again, w.h.p.) there exists a $t_c$-occupied circuit (in solid) in one of the semi-annuli $A_{2^j K, 2^{j+1} K} \cap V_{\HH}$, $1 \leq j \leq J$, which burns ``almost immediately''.}

\end{center}
\end{figure}

First, it follows from the classical RSW bounds at criticality that we can fix $K(k)$ large enough so that (in the pure birth process)
\begin{equation} \label{eq:pf_hp_1}
\PP \big( \text{at time $t_c$, } \calO^*(A_{k,K} \cap V_{\HH}) \text{ occurs} \big) \geq 1- \frac{\bar{\ve}}{3}
\end{equation}
(with obvious notation for the event $\calO^*$ in the semi-annulus $A^+_{k,K} := A_{k,K} \cap V_{\HH}$). We then let $\ul t$ be larger than $t_c$, but sufficiently close to it, so that
\begin{equation} 
\big| A^+_{k,K} \big| \cdot \big( 1 - e^{-(\ul t - t_c)} \big) \leq \frac{\bar{\ve}}{3}.
\end{equation}
Obviously, this requirement implies that
\begin{equation} \label{eq:pf_hp_2}
\PP \big( \text{no vertex of } A^+_{k,K} \text{ switches from vacant to occupied during } (t_c, \ul t) \big) \geq 1 - \frac{\bar{\ve}}{3}.
\end{equation}

From now on, we assume, without loss of generality, that $\zeta \in (0,\infty)$ (the case $\zeta = \infty$ can be handled through the same small change as we did toward the end of the proof of Theorem~\ref{thm:main_result1}). Using the existence of cone sites, similarly to the conclusion of Lemma~\ref{lem:cone_number} (with, e.g., the specific opening angle $\alpha = \frac{\pi}{3}$) and standard RSW bounds, it is easy to see that the following holds. For some $c_1(\zeta) > 0$, we have (for the process with marks): for all $j \geq 1$,
\begin{equation}
\PP \big( \text{at time $t_c$, there exists an occupied (unmarked) semi-circuit in } A^+_{2^j K, 2^{j+1}K} \big) \geq c_1.
\end{equation}
Moreover, the events appearing inside this probability can be ``made independent'' for odd values of $j$, using again localized versions of cone sites (as for the proof of Lemma~\ref{lem:cone_number}, see in particular Figure~\ref{fig:local_event}). Hence, there exists $J$ large enough so that
\begin{equation} \label{eq:pf_hp_3}
\PP \big( \text{during $(t_c,\ul t)$, a $t_c$-occupied semi-circuit burns in } A^+_{2^j K, 2^{j+1}K}, \text{ for some } j \in \{1,\ldots, J\} \big) \geq 1- \frac{\bar{\ve}}{3}.
\end{equation}

We observe that if the three events appearing in \eqref{eq:pf_hp_1}, \eqref{eq:pf_hp_2} and \eqref{eq:pf_hp_3} occur simultaneously, which has a probability at least $1 - \bar{\ve}$, then the following happens.
\begin{enumerate}[(1)]
\item Before time $\ul t$, the vertex $v$ is disconnected from infinity by a vacant semi-circuit, provided by $\calO^*(A^+_{k,K})$.

\item From time $\ul t$ on (and most likely, much earlier), $v$ is disconnected from infinity by a burnt semi-circuit.
\end{enumerate}
Hence, the occupied cluster of $v$ remains bounded over the whole time interval $[0,\infty)$, with probability at least $1 - \bar{\ve}$. Since $\bar{\ve}$ can be taken arbitrarily small, we finally get that
$$\PP \big( \text{the occupied cluster of $v$ remains bounded} \big) = 1.$$
This completes the proof, using the countability of $V_{\HH}$.
\end{proof}

\bibliographystyle{plain}
\bibliography{FF_boundary}

\end{document}